\documentclass[10pt,reqno]{amsart} 
\usepackage{epsfig,amssymb,amsmath,version}
\usepackage{amssymb,version,graphicx,fancybox,mathrsfs}
\usepackage{url,hyperref}
\usepackage{subfigure}
\usepackage{color}
\usepackage{stmaryrd}
\usepackage{multirow}
\usepackage{booktabs,siunitx}
\usepackage{booktabs}
\usepackage{multicol}
\setlength{\columnsep}{0.1cm}
\usepackage{booktabs} 
\usepackage{amsmath}
\usepackage{setspace}
\usepackage{array}
\usepackage{placeins}
\usepackage{colortbl}
\usepackage{enumerate}
\usepackage{enumitem}
\usepackage{tabu}                     
\usepackage{multirow}                 
\usepackage{multicol}                 
\usepackage{multirow}                
\usepackage{float}                    
\usepackage{makecell}                 
\usepackage{booktabs}                 
\usepackage{diagbox,bm}
\usepackage{algorithm,float}
\usepackage{algorithmicx}
\usepackage{algpseudocode}
\usepackage{lipsum}
\usepackage{changepage}

\floatname{algorithm}{Algorithm 1}

\textwidth=15.4cm
\textheight=21.6cm
\setlength{\oddsidemargin}{0.9cm}
\setlength{\evensidemargin}{0.9cm}

\catcode`\@=11 \theoremstyle{plain}
\@addtoreset{equation}{section}   

\@addtoreset{figure}{section}
\renewcommand\thefigure{\thesection.\@arabic\c@figure}
\newtheorem{thm}{\bf Theorem}

\newtheorem{cor}{\bf Corollary}

\newtheorem{lmm}{\bf Lemma}

\theoremstyle{remark}


\definecolor{ligreen}{rgb}{0.0, 0.3, 0.0}

\definecolor{darkblue}{rgb}{0.0, 0.0, 0.55}

\definecolor{anti-flashwhite}{rgb}{0.55, 0.57, 0.68}


\newcommand{\bs}[1]{\boldsymbol{#1}}


\graphicspath{{./Figures/} } 

\makeatletter
\newenvironment{breakablealgorithm}
  {
   \begin{center}
     \refstepcounter{algorithm}
     \hrule height.8pt depth0pt \kern2pt
     \renewcommand{\caption}[2][\relax]{
       {\raggedright\textbf{\ALG@name~\thealgorithm} ##2\par}%
       \ifx\relax##1\relax 
         \addcontentsline{loa}{algorithm}{\protect\numberline{\thealgorithm}##2}%
       \else 
         \addcontentsline{loa}{algorithm}{\protect\numberline{\thealgorithm}##1}%
       \fi
       \kern2pt\hrule\kern2pt
     }
  }{
     \kern2pt\hrule\relax
   \end{center}
  }
\makeatother

\begin{document}
\bibliographystyle{plain}

\title[A novel algorithm tailored for unconstrained optimization problems] {A novel numerical method tailored for unconstrained optimization problems}
\author[
	L. Li,\,    P. Xie\,  $\&$\,  L. Zhang
	]{
		\;\; Lin Li${}^{\dag}$,   \;\;  Pengcheng Xie${}^{\ddag}$ \;\; and\;\; Li Zhang${}^{\S}$
		}
	\thanks{${}^{\dag}$School of Mathematics and Physics,  University of South China, Hengyang, 421001, China. Email: lilinmath@usc.edu.cn (L. Li).\\
			\indent ${}^{\ddag}$Corresponding author.  Applied Mathematics and Computational Research Division, Lawrence Berkeley National Laboratory, 1 Cyclotron Road, Berkeley, 94720, CA, USA. Email: pxie@lbl.gov, pxie98@gmail.com (P. Xie). The work was done before the corresponding author joined LBNL. \\
		\indent ${}^{\S}$School of Mathematics, Southwestern University of Finance and Economics, Chengdu, China. Email: zhanglily@swufe.edu.cn (L. Zhang)}

\keywords{Nonsmooth, quadratic model, unconstrained optimization} \subjclass[2000]{90C56, 90C30, 65K05, 90C90}

\begin{abstract}
Unconstrained optimization problems become more common in scientific computing and engineering applications with the rapid development of artificial intelligence, and numerical methods for solving them more quickly and efficiently have been getting more attention and research. Moreover, an efficient method to minimize all kinds of objective functions is urgently needed, especially the nonsmooth objective function. Therefore, in the current paper, we focus on proposing a novel numerical method tailored for unconstrained optimization problems whether the objective function is smooth or not. To be specific, based on the variational procedure to refine the gradient and Hessian matrix approximations, an efficient quadratic model with $2n$ constrained conditions is established. Moreover, to improve the computational efficiency, a simplified model with 2 constrained conditions is also proposed, where the gradient and Hessian matrix can be explicitly updated, and the corresponding boundedness of the remaining $2n-2$ constrained conditions is derived. On the other hand, the novel numerical method is summarized in the Algorithm \ref{algorithm2025011301}, 
and approximation results on derivative information are also analyzed and shown. Numerical experiments involving smooth, derivative blasting, and non-smooth problems are tested, demonstrating its feasibility and efficiency. Compared with existing methods, our proposed method can efficiently solve smooth and non-smooth unconstrained optimization problems for the first time, and it is very easy to program the code, indicating that our proposed method not also has great application prospects, but is also very meaningful to explore practical complex engineering and scientific problems.
\end{abstract}
\maketitle

\section{Introduction}\label{sect1}

Nowadays, optimization problems are more and more common in nature optimizes, physical (or chemical) systems, engineering design, and so on. It is worth mentioning that for the most popular machine learning in computing differential equations, minimizing a loss function is its core content, and the corresponding optimization algorithm plays a key role in achieving a minimal solution of the loss function. Furtherly, many practical problems can be viewed as an
unconstrained optimization problems, where an objective function that depends on real variables
often needs to be optimized, and the corresponding optimization mathematical formulation is
\begin{equation}\label{eq2024121301}
\min_{{\bs x}\in \mathbb{R}^n} f({\bs x}),
\end{equation}
where $f: \mathbb{R}^n \rightarrow \mathbb{R}$ is a real function. So far there also exist many methods to solve \eqref{eq2024121301}. Based on the regularity of the objective function $f$, these methods can be divided roughly into two categories. In the current paper, we will propose a new and efficient method for solving \eqref{eq2024121301} whether the objective function $f$ is smooth or not, indicating that the proposed method is suitable for more general unconstrained optimization problem \eqref{eq2024121301}. Before we introduce our approach, we feel compelled to elaborate on existing methods and motivations.

When the objective function $f$ is smooth with respect to ${\bs x}$, some methods requiring derivative information have been proposed and developed. Here the first thing to mention is the line search method. To be specific, in the line search method, the main idea is to choose a direction ${\bs d}_{k}$ and search along this direction from the current iterate point ${\bs x}_{k}$, aiming to find a new iterate with a lower function value. In other words, an unconstrained minimization model to find a step length $\alpha$ along the direction ${\bs d}_k$ should be solved, i.e.,
\begin{equation}\label{eq2024121701}
\min_{\alpha > 0}f({\bs x}_k + \alpha {\bs d}_{k}).
\end{equation}
It is worth pointing out that when \eqref{eq2024121701} is solved exactly, the computational cost may be expensive. Instead, the line search method will generate a limited number of trial step lengths until it finds one that loosely approximates the minimum of \eqref{eq2024121701}. As seen in \cite{nocedal2006numerical}, a popular inexact line search condition is that $\alpha_k$ should provide sufficient decrease, as measured by the following inequality:
\begin{equation}\label{eq2024121702}
f({\bs x}_k + \alpha {\bs d}_k) \leq f({\bs x}_k) + c_{1}\alpha \nabla f^{\top}_k {\bs d}_k,
\end{equation}
for some constant $c_1\in (0, 1)$, where the reduction in $f$ is proportional to both the directional derivative $\nabla f^{\top}_k {\bs d}_k$ and the step length $\alpha_k$. The inequality \eqref{eq2024121702} is called the \textit{Armijo condition}. However, it is worth pointing out that the \textit{Armijo condition}   is not enough by itself to ensure that the line search method makes reasonable progress. To rule out unacceptably short steps, a second requirement is introduced, where the step length $\alpha_k$ is to satisfy
\begin{equation}\label{eq2024121703}
\nabla f({\bs x}_k + \alpha_k {\bs d}_k)^{\top} {\bs d}_k \geq c_2 \nabla f^{\top}_{k}{\bs d}_k,
\end{equation}
for some constant $c_2\in (c_1, 1)$. Obviously, \eqref{eq2024121703} ensures that the slope of $f$ at the point ${\bs x}_k + \alpha_k{\bs d}_k$ is greater than $c_2$ times the initial slope ($\alpha = 0$), which provide an indication that $f$ can be reduced significantly by moving further along the chosen direction. \eqref{eq2024121702} and \eqref{eq2024121703} are also known collectively as the \textit{Wolfe conditions}. Similar to the \textit{Wolfe conditions}, the \textit{Goldstein conditions} has also been proposed to ensure that the step length $\alpha$ achieves sufficient decrease but is not too short, and its mathematical formulation is
\begin{equation}
f({\bs x}_k) + (1 - c)\alpha_k \nabla f^{\top}_k {\bs d}_k \leq f({\bs x}_k + \alpha_k{\bs d}_k) \leq f({\bs x}_k) + c\alpha_k f^{\top}_k {\bs d}_k
\end{equation}
with $c\in (0, 1/2)$. In addition, the convergence of line search methods can be seen in \cite{nocedal2006numerical, sun2006optimization} for more details. When ${\bs d}_k$ in \eqref{eq2024121701} is selected as $-\nabla f_{k}$ (i.e., the steepest descent direction) at each step, the corresponding line search method is also called the steepest descent method. Unfortunately, two successive steepest descent directions in the steepest descent method are orthogonal. This leads to appear the zigzagging near the solution, which seriously affects its convergence. To improve the convergence, the famous Newton method has been proposed, i.e., ${\bs d}_k := -(\nabla^2 f_k)^{-1}\nabla f_k$ in \eqref{eq2024121701} is chosen. As we know, the Newton method depends heavily on the initial guess, and requires that Jthe acobian matrix has full column rank, which greatly reduces the computational efficiency. As a result, some Quasi-Newton methods have been proposed and developed, e.g., \cite{1977More}. When ${\bs d}_k$ in \eqref{eq2024121701} is selected by using a linear combination of ${\bs d}_{k-1}$ and $-\nabla f_{k}$, the corresponding line search method is called the Conjugate Gradient method, e.g., see \cite{nocedal2006numerical}. It is worth pointing out that for line search methods above, a search direction is generated, and then a suitable step length $\alpha$ along this direction is found. On the other hand, the trust-region method presented in \cite{sun2006optimization} was proposed to choose the direction and the step length simultaneously. Moreover, its main idea is that a trust region around the current point is defined, where a model to be an adequate representation of the objective function is used, and then a step will be chosen to approximate the minimizer of the model in the trust region. Recently, as studied in \cite{2022Two}, the trust-region method has some advantages compared with the classical Newtonian method, where the choices of initial guesses are much more relaxed and fairly flexible at times, and it is easier to find new solutions more efficiently and more quickly. Moreover, some interesting recent works on its further application in computing nonlinear differential equations can be seen in \cite{LiYe2024,li2025spectrallevenbergmarquardtdeflationmethodmultiple}.

When the objective function $f$ is non-smooth with respect to ${\bs x}$, solving \eqref{eq2024121301} will encounter inherent difficulties and are truly challenging. However, to our best knowledge, so far there are some optimization methods without derivative information (i.e., derivative-free methods), and they can be divided into three categories. Based on sampling points from geometric patterns, the first is a class of direct search methods to the variable space, where a relatively large number of function evaluations need to be computed. This severely reduces their computational efficiency. The second class of methods presented in \cite{1977More} is based on the finite difference and the Quasi-Newton method, where finite differences used to approximate derivative information may not always be robust, and its accuracy is hard to satisfy. These greatly affect their application. In \cite{2009Connn, 2002Marazzi, 2004Powell, 2009JJMore}, the third class of methods has been developed, where the sequential minimizations of models are constructed to approximate the objective function. Moreover, most of the third class of methods are based on linear (or quadratic) approximations \cite{powell2009uobyqa, powell2006newuoa}. In 2004, Powell proposed a underdetermined interpolating model with $2n+1$ sampling points \cite{2004Powell}. Later, in \cite{conn2000trust, nocedal2006numerical}, a trust region model-based method was proposed to ensure a good global convergence. Moreover, more interesting developments can be seen in \cite{Porcelli,LedWild2015,custodio2010incorporating,hare2025expected, Dzahini2024, 2014Zhangzaikun, 2012ZZhang, 2010Zhang, LarsonMenickellyWild2019, 2024Fan,xie2023twodimensional, xie2023derivativecombine,xie2025remuregionalminimalupdating}. On the other hand, some derivative-free solvers have been developed, e.g., PDFO \cite{ragonneau2024pdfo}, ORBIT \cite{wild2008orbit}, CONORBIT \cite{ling2010conorbit}, BOOSTERS \cite{diouane2020boosters} and DFLS \cite{powell2014dfls}.

In terms of the methods mentioned above, the Newton method shows the fast convergence near the solution, and is easy to program and implement. In addition, in the third class of methods above, a quadratic model to approximate a non-smooth objection function $f$ is very useful to overcome the lack of derivative information. Therefore, here we
will take full account of these advantages, and
a novel numerical method tailored for the unconstrained optimization problem \eqref{eq2024121301} is proposed whether the objective function $f$ is smooth or not. To be specific, $2n$ constrained conditions for determining a quadratic model to approximate the objective function is first proposed, where the variational procedure to refine the gradient and Hessian matrix approximations is introduced to ensure an effective representation of the objective function's local behavior, improving  the accuracy of quadratic approximation. Moreover, a simplified model with 2 constrained conditions is also proposed for solving \eqref{eq2024121301}, which greatly reduces the computational complexity.

The remainder of this paper is as follows. In Section \ref{sect2}, a constrained quadratic model approximated the objective function is established, where $2n$ constrained conditions are derived and presented in detail. In addition, a new simplified model with 2 constrained conditions is also proposed for improving the computational efficiency, and the corresponding boundedness of the remaining $2n-2$ constrained conditions is also derived. In addition, approximation results on derivative information are shown. In Section \ref{sect3}, based on the results presented in Section \ref{sect2}, a novel numerical method for computing \eqref{eq2024121301} is summarized and presented. To validate the effectiveness and feasibility of our proposed method, numerical experiments are tested in Section \ref{sect5}. Finally, Section \ref{sect6} summarizes our results and outlines potential future applications.

\section{An efficient constrained quadratic model and its simplification}\label{sect2}

In this section, to address the general (or complicated) objective function $f$ in \eqref{eq2024121301}, we mainly focus on constructing a smooth updating quadratic model to approximate it within a local range, and a simplified unconstrained optimization model is derived for the first time. In addition, the corresponding bounded analysis and approximation results on derivation information are also considered and shown.

Firstly, for the $k$-th iteration, a quadratic model $Q^{(k)}({\bs x})$ is introduced to approximate $f$, i.e.,
\begin{equation}\label{eq2024122301}
Q^{(k)}({\bs x}) = c +({\bs g}^{(k)})^{\top}{\bs x} - \frac{1}{2} {\bs x}^{\top} {\bs G}^{(k)} {\bs x}\in \mathbb{P}_2,
\end{equation}
where ${\bs G}^{(k)}\in {\mathbb R}^{n\times n}, {\bs g}^{(k)}\in {\mathbb R}^{n}$ and $c \in {\mathbb R}$ are unknown variables to be determined, and $\mathbb{P}_2$ represents the set of all algebraic polynomials of degree $\leq 2$. In the current paper, since we are based on the Newton iteration, the unknown variable $c$ can be ignored. Therefore, in the next section we will present the detailed process for determining ${\bs g}^{(k)}$ and ${\bs G}^{(k)}$ in \eqref{eq2024122301}.

In fact, here the quadratic model $Q^{(k)}({\bs x})$ should be constructed to replace the objective function $f$ for each minimization iteration. This means that ${\bs g}^{(k)}$ and ${\bs G}^{(k)}$ in \eqref{eq2024122301} need to be iteratively updated. To reduce the computational burden, we consider $({\bs G}^{(k)})^{\top} = {\bs G}^{(k)}$. Moreover, $\{{\bs x}_i\}_{i = k-n+1}^{k}$ are assumed to be model points that determine ${\bs g}^{(k)}$ and ${\bs G}^{(k)}$. In addition, we denote\vspace{-0.25cm}
\begin{equation}\label{eq2024122302}
{\bs \sigma}_i := {\bs x}_i - {\bs x}_{i-1}   \quad\quad\;\;\,\, \quad\quad   {\bs \tau}_{i} := {\bs x}_i - {\bs x}_0 = \sum_{j=1}^{i}{\bs \sigma}_{j},
\end{equation}\vspace{-0.3cm}
\begin{equation}\label{eq2024122303}
\triangle {\bs g}^{(k)} := {\bs g}^{(k)}_{k} - {\bs g}^{(k-1)}_{k-1} \quad\quad\quad    \triangle {\bs G}^{(k)} := {\bs G}^{(k)} - {\bs G}^{(k-1)},
\end{equation}
where ${\bs g}^{(k)}_i$ and ${\bs G}^{(k)}_i$ represent the gradient  and the Hessian matrix at the model point ${\bs x}_i$ given by the $k$-th quadratic model $Q^{(k)}({\bs x})$.
Furthermore, with \eqref{eq2024122302}-\eqref{eq2024122303}, the following relationship can be derived easily, i.e.,
\begin{equation}\label{eq2024122701}
\begin{split}
{\bs g}^{(k)}_{i} = &\;\, {\bs g}^{(k)}_{i-1} + \triangle{\bs G}^{(k)}{\bs \sigma}_i\\
=  &\;\, {\bs g}^{(k)}_{i-1} - {\bs g}^{(k)}_{k} +{\bs g}^{(k)}_{k} + \triangle{\bs G}^{(k)}{\bs \sigma}_i\\
 =   &\;\, {\bs G}^{(k)}({\bs x}_{i-1} - {\bs x}_k) + {\bs g}^{(k-1)}_{k-1} + \triangle{\bs g}^{(k)} + \triangle{\bs G}^{(k)}{\bs \sigma}_i\\
 =   &\;\, {\bs G}^{(k)}({\bs \tau}_i - {\bs \tau}_k - {\bs \sigma}_i) + {\bs g}^{(k-1)}_{k-1} + \triangle{\bs g}^{(k)} + \triangle{\bs G}^{(k)}({\bs \tau}_i - {\bs \tau}_{i-1})\\
 =  &\;\, ({\bs G}^{(k-1)} + \triangle{\bs G}^{(k)})({\bs \tau}_i - {\bs \tau}_k - {\bs \sigma}_i) + {\bs g}^{(k-1)}_{k-1} + \triangle{\bs g}^{(k)} + \triangle{\bs G}^{(k)}({\bs \tau}_i - {\bs \tau}_{i-1})\\
 =  &\;\, {\bs g}^{(k-1)}_{k-1} + \triangle{\bs g}^{(k)} - {\bs G}^{(k-1)}({\bs \tau}_{k} - {\bs \tau}_{i-1}) - \triangle{\bs G}^{(k)}({\bs \tau}_{k} - {\bs \tau}_{i}).
\end{split}
\end{equation}
Motivated by \cite{powell04onupdating,XIE2025116146}, here we request that for the model points $\{{\bs x}_i\}_{i = k-n+1}^{k}$, the gradient ${\bs g}^{(k)}_{i}$ is orthogonal to the descent direction at the previous step (i.e., ${\bs \sigma}_i$ in \eqref{eq2024122302}), i.e.,
\begin{equation}\label{eq2024122401}
{\bs \sigma}^{\top}_i {\bs g}^{(k)}_i = {\bs \sigma}^{\top}_i ({\bs g}^{(k-1)}_{k-1} + \triangle {\bs g}^{(k)} - {\bs G}^{(k)}({\bs \tau}_k - {\bs \tau}_i)) = 0,   \quad\;  k-n+1 \leq i \leq k.
\end{equation}
From \eqref{eq2024122401}, we can derive that
\begin{equation}\label{eq2024122402}
{\bs \sigma}^{\top}_{i}\triangle {\bs g}^{(k)} = {\bs \sigma}^{\top}_{i}{\bs G}^{(k)}({\bs \tau}_k - {\bs \tau}_i) - {\bs \sigma}^{\top}_{i}{\bs g}^{(k-1)}_{k-1}.
\end{equation}
With \eqref{eq2024122303}, \eqref{eq2024122402} becomes
\begin{equation}\label{eq2024122403}
{\bs \sigma}^{\top}_{i}\triangle {\bs g}^{(k)} - {\bs \sigma}^{\top}_{i} \triangle{\bs G}^{(k)}({\bs \tau}_k - {\bs \tau}_i) = {\bs \epsilon}^{(k)}_{i}, \quad\; k-n+1 \leq i \leq k,
\end{equation}
where
\begin{equation}\label{eq2024122407}
{\bs \epsilon}^{(k)}_{i} = {\bs \sigma}^{\top}_{i}{\bs G}^{(k-1)}({\bs \tau}_k - {\bs \tau}_i) - {\bs \sigma}^{\top}_{i}{\bs g}^{(k-1)}_{k-1}.
\end{equation}
On the other hand, to ignore $c$ in \eqref{eq2024122301},
we assume that
\begin{equation}\label{eq2025020902}
f({\bs x}_i) = Q^{(k)}({\bs x}_i)  \quad\;   k-n+1 \leq i \leq k.
\end{equation}
Let $f_i := f({\bs x}_i)$\, $(k-n+1 \leq i \leq k)$, and we can derive
\begin{equation}\label{eq2024122404}
\begin{split}
\triangle f_{i}=  & \,f_i - f_{i-1}\\
~   = &\, Q^{(k)}_{i} - Q^{(k)}_{i-1}\\
~   = &\, ({\bs x}_{i-1}+{\bs \sigma}_i)^{\top}{\bs g}^{(k)}_{i} - {\bs x}^{\top}_{i-1}{\bs g}^{(k)}_{i-1} + \frac{1}{2}{\bs x}^{\top}_{i-1}{\bs G}^{(k)}{\bs x}_{i-1}\\
~    & \, -\frac{1}{2}({\bs x}_{i-1}+{\bs \sigma}_i)^{\top}{\bs G}^{(k)}({\bs x}_{i-1}+{\bs \sigma}_i).
\end{split}
\end{equation}
Substituting \eqref{eq2024122701} into \eqref{eq2024122404} holds
\begin{equation}\label{eq2024122702}
\begin{split}
\triangle f_{i}  = &\, ({\bs x}_{i-1}+{\bs \sigma}_i)^{\top}({\bs g}^{(k)}_{i-1}+{\bs G}^{(k)}{\bs \sigma}_i) + \frac{1}{2}{\bs x}^{\top}_{i-1}{\bs G}^{(k)}{\bs x}_{i-1}\\
~  &\, -\frac{1}{2}({\bs x}_{i-1}+{\bs \sigma}_i)^{\top}{\bs G}^{(k)}({\bs x}_{i-1}+{\bs \sigma}_i) - {\bs x}^{\top}_{i-1}{\bs g}^{(k)}_{i-1}\\
~  = &\,  {\bs \sigma}^{\top}_{i}{\bs g}^{(k)}_{i-1} + \frac{1}{2}{\bs \sigma}^{\top}_{i}{\bs G}^{(k)}{\bs \sigma}_{i}.
\end{split}
\end{equation}
With \eqref{eq2024122303}, \eqref{eq2024122702} becomes
\begin{equation}\label{eq2024122405}
{\bs \sigma}^{\top}_{i}\triangle {\bs g}^{(k)} - {\bs \sigma}^{\top}_{i}\triangle {\bs G}^{(k)}({\bs \tau}_k - {\bs \tau}_{i-1}) + \frac{1}{2}{\bs \sigma}^{\top}_{i}\triangle {\bs G}^{(k)}{\bs \sigma}_{i} = {\bs \rho}^{(k)}_i,  \quad\; k-n+1 \leq i \leq k,
\end{equation}
where
\begin{equation}\label{eq2024122406}
{\bs \rho}^{(k)}_{i} = \triangle f_{i} - {\bs \sigma}^{\top}_{i}{\bs g}^{(k-1)}_{i-1} + {\bs \sigma}^{\top}_{i}{\bs G}^{(k-1)}({\bs \tau}_k - {\bs \tau}_{i-1}) - \frac{1}{2}{\bs \sigma}^{\top}_{i}{\bs G}^{(k-1)}{\bs \sigma}_{i}.
\end{equation}
By using \eqref{eq2024122302}, \eqref{eq2024122405} and \eqref{eq2024122406} can be transformed as follows:
\begin{equation}\label{eq20241224051}
{\bs \sigma}^{\top}_{i}\triangle {\bs g}^{(k)} - {\bs \sigma}^{\top}_{i}\triangle {\bs G}^{(k)}({\bs \tau}_k - {\bs \tau}_{i}) - \frac{1}{2}{\bs \sigma}^{\top}_{i}\triangle {\bs G}^{(k)}{\bs \sigma}_{i} = {\bs \rho}^{(k)}_i
\end{equation}
and
\begin{equation}\label{eq20241224061}
{\bs \rho}^{(k)}_{i} = \triangle f_{i} - {\bs \sigma}^{\top}_{i}{\bs g}^{(k-1)}_{i-1} + {\bs \sigma}^{\top}_{i}{\bs G}^{(k-1)}({\bs \tau}_k - {\bs \tau}_{i}) + \frac{1}{2}{\bs \sigma}^{\top}_{i}{\bs G}^{(k-1)}{\bs \sigma}_{i}.
\end{equation}
With \eqref{eq2024122407} and \eqref{eq2024122406}, we denote that
\begin{equation}\label{eq2024122408}
\begin{split}
\hat{{\bs \rho}}^{(k)}_{i} := &\, 2({\bs \epsilon}^{(k)}_{i} -{\bs \rho}^{(k)}_{i})\\
~  = &\, 2{\bs \sigma}^{\top}_{i}{\bs g}^{(k-1)}_{i-1} - 2{\bs \sigma}^{\top}_{i}{\bs g}^{(k-1)}_{k-1} -2\triangle f_{i} - {\bs \sigma}^{\top}_{i}{\bs G}^{(k-1)}{\bs \sigma}_{i}.
\end{split}
\end{equation}
As a result, \eqref{eq2024122405} can be simplified as
\begin{equation}\label{eq2024122409}
{\bs \sigma}^{\top}_{i}\triangle{\bs G}^{(k)}{\bs \sigma}_{i} = \hat{{\bs \rho}}^{(k)}_{i},  \quad\; k-n+1 \leq i \leq k.
\end{equation}

Next, we will construct a new objective function for updating $\triangle{\bs g}^{(k)}$ and $\triangle{\bs G}^{(k)}$ at the $k$-th iteration. Here it is worth pointing out that $(\triangle{\bs G}^{(k)})^{\top} = \triangle{\bs G}^{(k)}$ is assumed to consist with  $({\bs G}^{(k)})^{\top} = {\bs G}^{(k)}$, indicating that the required model points will be reduced, and the computational efficiency will be improved.
Motivated by the Least-Norm model given in \cite{powell04onupdating, xie2023derivativecombine, xieyuanH2p,xie2023dfoto},
we construct a new objective function as follow:
\begin{equation}\label{eq2024123001}
\Phi = \Phi_1 + \nu_k\Phi_2,
\end{equation}
where $\nu_k$ is a nonnegative constant, $\Phi_1:= \frac{1}{2}(\triangle {\bs g}^{(k)})^{\top}{\bs V} \triangle{\bs g}^{(k)}$, $\Phi_2 = \frac{1}{2}\|{\bs W}\triangle{\bs G}^{(k)}\|^2_{F}$ ($\|\cdot\|_{F}$ represents the Frobenius norm, i.e., when a matrix $\textbf{U} = (u_{ij})_{n\times n}$ is given, we define $\|\textbf{U}\|_{F} = (\sum_{i, j}u^2_{ij})^{1/2}$) and the matrices ${\bs V}$ and ${\bs W}$ are symmetric and positive definite. For any nonzero $\triangle{\bs g}^{(k)}$, it holds $\Phi_1, \Phi_2 > 0$.
the constant $\nu_k$ in \eqref{eq2024123001} is introduced to balance the magnitude of $\Phi_1$ and $\Phi_2$, and the reader is referred to \cite{powell04onupdating} for more details.\vspace{0.15cm}

Next, we will present a following theorem to illustrate a property of \eqref{eq2024123001}.\vspace{-0.05cm}
\begin{thm}\label{thm2025010101}
The objective function $\Phi$ defined in \eqref{eq2024123001} is a strictly convex function of $Q^{(k)}$.
\end{thm}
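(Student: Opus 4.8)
The plan is to reduce the assertion to the positive definiteness of a quadratic form and then transport that property across an affine change of coordinates. First I would make the phrase ``function of $Q^{(k)}$'' precise: since the constant $c$ in \eqref{eq2024122301} is discarded, a model $Q^{(k)}\in\mathbb{P}_2$ is in one-to-one correspondence (modulo additive constants) with the pair $({\bs g}^{(k)},{\bs G}^{(k)})$, and because the $(k-1)$-th iterate's quantities ${\bs G}^{(k-1)}$, ${\bs g}^{(k-1)}_{k-1}$ and the model points are frozen at this stage, \eqref{eq2024122303} together with $\nabla Q^{(k)}({\bs x})={\bs g}^{(k)}-{\bs G}^{(k)}{\bs x}$ exhibits an affine bijection $Q^{(k)}\longleftrightarrow(\triangle{\bs g}^{(k)},\triangle{\bs G}^{(k)})$, with $\triangle{\bs G}^{(k)}$ ranging over the symmetric matrices in accordance with $(\triangle{\bs G}^{(k)})^{\top}=\triangle{\bs G}^{(k)}$. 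Since strict convexity is preserved under precomposition with an affine bijection, it suffices to show that $\Phi=\Phi_1+\nu_k\Phi_2$, regarded as a function of $(\triangle{\bs g}^{(k)},\triangle{\bs G}^{(k)})$, is strictly convex.

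Next I would note that $\Phi$ is a quadratic form whose two pieces decouple across the two blocks of variables: $\Phi_1=\frac{1}{2}(\triangle{\bs g}^{(k)})^{\top}{\bs V}\triangle{\bs g}^{(k)}$ involves only $\triangle{\bs g}^{(k)}$ and $\Phi_2=\frac{1}{2}\|{\bs W}\triangle{\bs G}^{(k)}\|_F^2$ only $\triangle{\bs G}^{(k)}$. Because ${\bs V}$ is symmetric positive definite, $\Phi_1$ is a positive definite quadratic form on $\mathbb{R}^n$. For $\Phi_2$, writing $\triangle{\bs G}^{(k)}$ in terms of its columns ${\bs a}_1,\dots,{\bs a}_n$ gives $\|{\bs W}\triangle{\bs G}^{(k)}\|_F^2=\sum_{j=1}^{n}{\bs a}_j^{\top}{\bs W}^2{\bs a}_j$, and since ${\bs W}^2$ is symmetric positive definite this is zero only if every ${\bs a}_j$ vanishes, i.e. $\triangle{\bs G}^{(k)}=0$; thus $\Phi_2$ is a positive definite quadratic form on $\mathbb{R}^{n\times n}$, hence also on its subspace of symmetric matrices. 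Taking $\nu_k>0$, the Hessian of $\Phi$ in the coordinates of $(\triangle{\bs g}^{(k)},\triangle{\bs G}^{(k)})$ is block diagonal with both blocks positive definite (one built from ${\bs V}$, the other from $\nu_k{\bs W}^2$), so $\Phi$ is strictly convex; equivalently, for distinct models $Q_1,Q_2$ the associated parameter pairs differ and the strict version of Jensen's inequality for a positive definite quadratic form gives $\Phi(\lambda Q_1+(1-\lambda)Q_2)<\lambda\Phi(Q_1)+(1-\lambda)\Phi(Q_2)$ for $\lambda\in(0,1)$.

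I expect the only genuinely delicate points to be (i) checking that $\Phi_2$ stays positive \emph{definite}, not merely positive semidefinite, after restricting $\triangle{\bs G}^{(k)}$ to symmetric matrices --- this hinges solely on the invertibility of ${\bs W}$, since ${\bs W}M=0$ forces $M=0$, and not on any special feature of the Frobenius norm --- and (ii) the degenerate case $\nu_k=0$, where $\Phi=\Phi_1$ is constant along all $\triangle{\bs G}^{(k)}$ directions and therefore only convex; the statement should be read under the standing assumption $\nu_k>0$, consistent with $\nu_k$ being a positive balancing weight, and I would record this explicitly. The remaining steps --- verifying the affine bijection between $Q^{(k)}$ and $(\triangle{\bs g}^{(k)},\triangle{\bs G}^{(k)})$ once the $(k-1)$-th quantities are held fixed, and the elementary identity $\|{\bs W}\triangle{\bs G}^{(k)}\|_F^2=\sum_j{\bs a}_j^{\top}{\bs W}^2{\bs a}_j$ --- are routine.
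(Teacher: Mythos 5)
Your argument is correct and is essentially the paper's: the paper likewise identifies $Q^{(k)}$ affinely with $(\nabla Q^{(k)},\nabla^2 Q^{(k)})$ (the $(k-1)$-th quantities being frozen constants), factors ${\bs V}={\bs V}_1{\bs V}_1^{\top}$ to write $\Phi$ as a sum of squared norms, and then verifies strict convexity by expanding $\Phi(\alpha\hat{Q}^{(k)}+(1-\alpha)\tilde{Q}^{(k)})-\alpha\Phi(\hat{Q}^{(k)})-(1-\alpha)\Phi(\tilde{Q}^{(k)})=(\alpha^2-\alpha)\bigl(\|{\bs V}_1^{\top}(\nabla\hat{Q}^{(k)}-\nabla\tilde{Q}^{(k)})\|_2^2+\nu_k\|{\bs W}(\nabla^2\hat{Q}^{(k)}-\nabla^2\tilde{Q}^{(k)})\|_F^2\bigr)<0$, which is the direct Jensen form of your block positive-definiteness argument. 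Your explicit caveat that $\nu_k>0$ is needed (the paper only assumes $\nu_k\geq 0$, and for $\nu_k=0$ two models with equal gradients but different Hessians would make the displayed quantity vanish) is a point the paper's own proof silently glosses over, and is worth recording.
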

\begin{proof}
For the $k$-th iteration, we denote from \eqref{eq2024122301}
\begin{equation}\label{eq2025010101}
\nabla Q^{(k)} = {\bs g}^{(k)}_{k}  \quad\quad  \textrm{and}  \quad\quad  \nabla^2 Q^{(k)} = {\bs G}^{(k)}.
\end{equation}
With \eqref{eq2024122303}, it holds that
\begin{equation}\label{eq2025010102}
\triangle {\bs g}^{(k)} = \nabla Q^{(k)} - {\bs g}^{(k-1)}_{k-1}  \quad\quad   \textrm{and}  \quad\quad   \triangle{\bs G}^{(k)} = \nabla^2 Q^{(k)} - {\bs G}^{(k-1)},
\end{equation}
where ${\bs g}^{(k-1)}_{k-1}$ and ${\bs G}^{(k-1)}$ can be considered as the constants at the $k$-th iteration. Since ${\bs V}$ is symmetric positive definite, we can obtain
\begin{equation}\label{eq2025010103}
{\bs V} = {\bs V}_1{\bs V}^{\top}_1,
\end{equation}
where ${\bs V}_1$ is a lower triangular matrix with positive diagonal entries. Substituting \eqref{eq2025010102}-\eqref{eq2025010103} into \eqref{eq2024123001} has
\begin{equation}
\Phi = \|{\bs V}^{\top}_1(\nabla Q^{(k)}-{\bs g}^{(k-1)}_{k-1})\|^2_2
+ \nu_k\|{\bs W}(\nabla^2Q^{(k)} - {\bs G}^{(k-1)})\|^2_{F}.
\end{equation}
For any $\hat{Q}^{(k)}, \tilde{Q}^{(k)} \in \mathbb{P}_2$, we have
\begin{equation}\label{eq2025010201}
\begin{split}
~   & \Phi(\alpha\hat{Q}^{(k)} + (1-\alpha)\tilde{Q}^{(k)}) - (\alpha\Phi(\hat{Q}^{(k)}) + (1-\alpha)\Phi(\tilde{Q}^{(k)}))\\
=   &\,\, \|{\bs V}^{\top}_1(\alpha \nabla \hat{Q}^{(k)} + (1-\alpha)\nabla \tilde{Q}^{(k)} -{\bs g}^{(k-1)}_{k-1})\|^2_2\\
~  & +\nu_k\|{\bs W}( \alpha \nabla^2\hat{Q}^{(k)} + (1-\alpha)   \nabla^2\tilde{Q}^{(k)} - {\bs G}^{(k-1)})\|^2_{F}\\
~    & -\alpha(\|{\bs V}^{\top}_1(\nabla \hat{Q}^{(k)}-{\bs g}^{(k-1)}_{k-1})\|^2_2 + \nu_k\|{\bs W}(\nabla^2\hat{Q}^{(k)}-{\bs G}^{(k-1)})\|^2_{F})\\
~  & -(1-\alpha)(\|{\bs V}^{\top}_1(\nabla \tilde{Q}^{(k)}-{\bs g}^{(k-1)}_{k-1})\|^2_2 + \nu_k\|{\bs W}(\nabla^2\tilde{Q}^{(k)}-{\bs G}^{(k-1)})\|^2_{F})\\
=   &\, (\alpha^2 - \alpha)(\|{\bs V}^{\top}_1(\nabla\hat{Q}^{(k)}-\nabla\tilde{Q}^{(k)})\|^2_2 + \nu_k\|{\bs W}(\nabla^2\hat{Q}^{(k)} - \nabla^2\tilde{Q}^{(k)})\|^2_{F}).
\end{split}
\end{equation}
Since $\alpha\in (0, 1)$ and $\hat{Q}^{(k)}\neq\tilde{Q}^{(k)}$, from \eqref{eq2025010201} we can derive $\Phi < 0$, which means that $\Phi$ is a strictly convex function of $Q^{(k)}$.
\end{proof}
Combing with \eqref{eq2024122403} and \eqref{eq2024122409}, a constrained optimization problem is formed as follow:
\begin{equation}\label{eq2024122505}
\begin{split}
\min &\;\, \Phi\\
\textrm{s.t.} & \begin{cases}\vspace{0.2cm}
{\bs \sigma}^{\top}_{i}\triangle{\bs G}^{(k)}{\bs \sigma}_{i} = \hat{{\bs \rho}}^{(k)}_{i},\\
{\bs \sigma}^{\top}_{i}\triangle {\bs g}^{(k)} - {\bs \sigma}^{\top}_{i} \triangle{\bs G}^{(k)}({\bs \tau}_k - {\bs \tau}_i) = {\bs \epsilon}^{(k)}_{i},
\end{cases}
\end{split}
\end{equation}
where $k-n+1 \leq i \leq k$. Based on the \textbf{Theorem} \ref{thm2025010101} and the results given in \cite{powell04onupdating}, the constrained optimization problem \eqref{eq2024122505} has a unique solution. On the other hand, to solve \eqref{eq2024122505}, the Lagrange multiplier method is used, i.e.,\vspace{-0.1cm}
\begin{equation}
\begin{split}
\mathcal{L}(\triangle {\bs g}^{(k)}, \triangle {\bs G}^{(k)}, {\bs \eta}, {\bs \theta}) = & \;\Phi - \sum_{i=k-n+1}^{k}\eta_i(\frac{1}{2}{\bs \sigma}^{\top}_{i}\triangle{\bs G}^{(k)}{\bs \sigma}_{i} - \hat{{\bs \rho}}^{(k)}_{i})\\
&\; -\sum_{i=k-n+1}^{k}\theta_i({\bs \sigma}^{\top}_{i}\triangle {\bs g}^{(k)} - {\bs \sigma}^{\top}_{i} \triangle{\bs G}^{(k)}({\bs \tau}_k - {\bs \tau}_i) - {\bs \epsilon}^{(k)}_{i}),
\end{split}
\end{equation}
where ${\bs \eta} = (\eta_{k-n+1}, \cdots, \eta_{k})^{\top}$ and ${\bs \theta} = (\theta_{k-n+1}, \cdots, \theta_{k})^{\top}$. Based on the KKT conditions, we derive
\begin{equation}\label{eq2024122501}
\frac{\partial\mathcal{L}}{\partial\triangle{\bs g}^{(k)}} = {\bs V}\triangle{\bs g}^{(k)} - \sum_{i=k-n+1}^{k}{\bs \theta}_i{\bs \sigma}_i = {\bs 0},
\end{equation}
\begin{equation}\label{eq2024122502}
\frac{\partial\mathcal{L}}{\partial\triangle{\bs G}^{(k)}} = \nu_k{\bs W}^2\triangle{\bs G}^{(k)} - \frac{1}{2}\sum_{i=k-n+1}^{k}\eta_i{\bs \sigma}_i{\bs \sigma}^{\top}_i + \sum_{i=k-n+1}^{k}\theta_i{\bs \sigma}_i({\bs \tau}_k - {\bs \tau}_i)^{\top} = {\bs 0}.
\end{equation}
Let ${\bs \Lambda} := {\bs V}^{-1}$ and ${\bs M} := {\bs W}^{-2}$, and \eqref{eq2024122501}-\eqref{eq2024122502} become
\begin{equation}\label{eq2024122503}
\triangle{\bs g}^{(k)} = {\bs \Lambda}\sum_{i=k-n+1}^{k}{\bs \theta}_i{\bs \sigma}_i,
\end{equation}
\begin{equation}\label{eq2024122504}
\triangle{\bs G}^{(k)} = \frac{1}{2}{\bs M}(\sum_{i=k-n+1}^{k}\eta_i{\bs \sigma}_i{\bs \sigma}^{\top}_i - 2\sum_{i=k-n+1}^{k}\theta_i{\bs \sigma}_i({\bs \tau}_k - {\bs \tau}_i)^{\top}).
\end{equation}
Substituting \eqref{eq2024122503}-\eqref{eq2024122504} into the constrained conditions in \eqref{eq2024122505} is
\begin{equation}\label{eq2024122506}
{\bs \sigma}^{\top}_i{\bs M}(\sum_{j=k-n+1}^{k}\eta_i{\bs \sigma}_j{\bs \sigma}^{\top}_j - 2\sum_{j=k-n+1}^{k}\theta_j{\bs \sigma}_j({\bs \tau}_k - {\bs \tau}_j)^{\top}){\bs \sigma}_i = 2\nu_k\hat{\rho}^{(k)}_{i},
\end{equation}
\begin{equation}\label{eq2024122507}
2\nu_k{\bs \sigma}^{\top}_{i}{\bs \Lambda}\sum_{j=k-n+1}^{k}\theta_j{\bs \sigma}_j - {\bs \sigma}^{\top}_{i}{\bs M}(\sum_{j=k-n+1}^{k}\eta_i{\bs \sigma}_j{\bs \sigma}^{\top}_j - 2\sum_{j=k-n+1}^{k}\theta_j{\bs \sigma}_j({\bs \tau}_k - {\bs \tau}_j)^{\top})({\bs \tau}_k - {\bs \tau}_i) = 2\nu_k\epsilon^{(k)}_i.
\end{equation}
To show \eqref{eq2024122506}-\eqref{eq2024122507} more clearly, they are organized as follow:
\begin{equation}\label{eq2024122703}
\begin{bmatrix}\vspace{0.15cm}
{\bs H}_1  &-2{\bs B}\\
-{\bs H}_2 &2{\bs D}
\end{bmatrix}
\begin{bmatrix}\vspace{0.15cm}
{\bs \eta}\\
{\bs \theta}
\end{bmatrix}
=\begin{bmatrix}
2\nu_k\hat{{\bs \rho}}^{(k)}\\
2\nu_k{\bs \epsilon}^{(k)}
\end{bmatrix},
\end{equation}
where
\vspace{0.15cm}
\begin{subequations}
\begin{equation}
{\bs \eta} = (\eta_{k-n+1}, \cdots, \eta_{k})^{\top},  \quad\quad  {\bs \theta} = (\theta_{k-n+1}, \cdots, \theta_{k})^{\top},
\end{equation}
\vspace{0.05cm}
\begin{equation}
\hat{{\bs \rho}}^{(k)} = (\hat{\rho}^{(k)}_{k-n+1}, \cdots, \hat{\rho}^{(k)}_{k})^{\top},  \quad\quad   {\bs \epsilon}^{(k)} = (\epsilon^{(k)}_{k-n+1}, \cdots, \epsilon^{(k)}_{k})^{\top},
\end{equation}
\vspace{0.05cm}
\begin{equation}
{\bs A}_{ij} = ({\bs \sigma}^{\top}_{i+k-n}{\bs \sigma}_{j+k-n})(\bs \sigma_{i+k-n}^{\top} \bs M \bs \sigma_{j+k-n}),   \quad\;
{\bs B}_{ij} = {\bs \sigma}^{\top}_{i+k-n}{\bs M}{\bs \sigma}_{j+k-n}({\bs \tau}_k - {\bs \tau}_{j+k-n})^\top \bs \sigma_i,
\end{equation}
\vspace{0.05cm}
\begin{equation}
{\bs H}_1 = \textrm{diag}\{ \sum_{j=1}^{n}{\bs A}_{1j}, \cdots, \sum_{j=1}^{n}{\bs A}_{nj} \},  \quad\quad\;  {\bs H}_2 = \textrm{diag}\{ \sum_{j=1}^{n}{\bs B}_{1j}, \cdots, \sum_{j=1}^{n}{\bs B}_{nj} \},
\end{equation}
\vspace{0.05cm}
\begin{equation}
\hspace{0.25cm}{\bs D}_{ij} = {\nu_k \bs \sigma}^{\top}_{i+k-n}{\bs \Lambda}{\bs \sigma}_{j+k-n} + {\bs \sigma}^{\top}_{i+k-n}{\bs M}{\bs \sigma}_{j+k-n}({\bs \tau}_k - {\bs \tau}_{i+k-n})^{\top}({\bs \tau}_k - {\bs \tau}_{j+k-n}).\vspace{0.12cm}
\end{equation}
\end{subequations}
\eqref{eq2024122703} can be solved by the GMRES given in \cite{1986Youcef}. Then, $\triangle{\bs g}^{(k)}$ and $\triangle{\bs G}^{(k)}$ are obtained by substituting ${\bs \eta}$ and ${\bs \theta}$ into \eqref{eq2024122503}-\eqref{eq2024122504}.

To improve the computational efficiency, we will relax the constrained conditions shown in \eqref{eq2024122505}. In addition, to distinguish $\triangle{\bs g}^{(k)}$ and $\triangle{\bs G}^{(k)}$ presented in \eqref{eq2024122505}, we use $\triangle{\bs g}_{*}^{(k)}$ and $\triangle{\bs G}_{*}^{(k)}$ to replace them. As a result,
a new constrained optimization problem is formed as follow:
\begin{equation}\label{eq20241227001}
\begin{split}
\min &\;\, \Phi := \|{\bs V}^{\top}_1\triangle {\bs g}_{*}^{(k)}\|_2^2 + \nu_k\|{\bs W}\triangle {\bs G}_{*}^{(k)}\|_{F}^2\\
\textrm{s.t.} & \begin{cases}\vspace{0.2cm}
\frac{1}{2}{\bs \sigma}^{\top}_{k}\triangle{\bs G}_{*}^{(k)}{\bs \sigma}_{k} = \check{{\bs \rho}}^{(k)}_{k},\\
{\bs \sigma}^{\top}_{k}\triangle {\bs g}_{*}^{(k)} = \hat{{\bs \epsilon}}^{(k)}_{k},
\end{cases}
\end{split}
\end{equation}
where
\begin{equation}\label{eq20241227002}
\check{{\bs \rho}}^{(k)}_{k} = -\triangle f_{k} - \frac{1}{2}{\bs \sigma}^{\top}_{k}{\bs G}^{(k-1)}{\bs \sigma}_{k}   \quad\quad  \textrm{and} \quad\quad
\hat{{\bs \epsilon}}^{(k)}_{k} = - {\bs \sigma}^{\top}_{k}{\bs g}^{(k-1)}_{k-1}.
\end{equation}
Based on \eqref{eq2024122503}-\eqref{eq2024122504}, we can derive that
\begin{equation}\label{eq20241227004}
\triangle {\bs g}_{*}^{(k)} = \theta_k{\bs \Lambda}{\bs \sigma}_k
\quad\quad  \textrm{and} \quad\quad
\triangle {\bs G}_{*}^{(k)} = \frac{1}{2\nu_k}\eta_k{\bs M}{\bs \sigma}_{k}{\bs \sigma}^{\top}_{k}.
\end{equation}
Substituting \eqref{eq20241227004} into \eqref{eq20241227001} holds that
\begin{equation}\label{eq2024122801}
\eta_k = -\frac{4\triangle f_{k} + 2{\bs \sigma}^{\top}_{k}{\bs G}^{(k-1)}{\bs \sigma}_k}{{\bs \sigma}^{\top}_{k}{\bs \sigma}_{k}{\bs \sigma}^{\top}_{k}{\bs M}{\bs \sigma}_k}\nu   \quad\quad   \textrm{and}  \quad\quad   \theta_{k} = -\frac{{\bs \sigma}^{\top}_{k}{\bs g}^{(k-1)}_{k-1}}{{\bs \sigma}^{\top}_{k}{\bs \Lambda}{\bs \sigma}_{k}}.
\end{equation}
With \eqref{eq2024122801}, we can obtain from \eqref{eq20241227004}
\begin{equation}\label{eq2024122802}
\triangle {\bs g}_{*}^{(k)} = -\frac{{\bs \sigma}^{\top}_k{\bs g}^{(k-1)}_{k-1}}{{\bs \sigma}^{\top}_k{\bs \Lambda}{\bs \sigma}_k}{\bs \Lambda}{\bs \sigma}_k  \quad\quad \textrm{and}  \quad\quad \triangle {\bs G}_{*}^{(k)} = -\frac{2\triangle f_{k} + {\bs \sigma}^{\top}_{k}{\bs G}^{(k-1)}{\bs \sigma}_k}{\nu_k{\bs \sigma}^{\top}_{k}{\bs \sigma}_{k}{\bs \sigma}^{\top}_{k}{\bs M}{\bs \sigma}_k}{\bs M}{\bs \sigma}_k{\bs \sigma}^{\top}_k.
\end{equation}
For simplicity, in the current paper we consider ${\bs \Lambda} = I$ and ${\bs M} = I$, where $I\in \mathbb{R}^{n\times n}$ is the identity matrix. As a result, we have
\begin{equation}\label{eq2024122803}
{\bs \sigma}^{\top}_{k}{\bs M}{\bs \sigma}_k = \|{\bs \sigma}_k\|_2^2   \quad\quad  \textrm{and}  \quad\quad  {\bs \sigma}^{\top}_{k}{\bs \Lambda}{\bs \sigma}_k = \|{\bs \sigma}_k\|_2^2.
\end{equation}
\eqref{eq2024122801}-\eqref{eq2024122802} become
\begin{subequations}
\begin{equation}\label{eq2025012001}
\eta_k = -\frac{4\triangle f_{k} + 2{\bs \sigma}^{\top}_{k}{\bs G}^{(k-1)}{\bs \sigma}_k}{\|{\bs \sigma}_k\|^4_2}   \quad\,   \textrm{and}  \quad\,   \theta_{k} = -\frac{{\bs \sigma}^{\top}_{k}{\bs g}^{(k-1)}_{k-1}}{\|{\bs \sigma}_{k}\|^2_2},
\end{equation}
\begin{equation}\label{eq2025010202}
\hspace{2cm}\triangle {\bs g}_{*}^{(k)} = -\frac{{\bs \sigma}^{\top}_k{\bs g}^{(k-1)}_{k-1}{\bs \sigma}_k}{\|{\bs \sigma}_k\|^2_2}  \quad\, \textrm{and}  \quad\, \triangle {\bs G}_{*}^{(k)} = -\frac{(2\triangle f_{k} + {\bs \sigma}^{\top}_{k}{\bs G}^{(k-1)}{\bs \sigma}_k){\bs \sigma}_k{\bs \sigma}^{\top}_k}{\nu_k\|{\bs \sigma}_k\|^4_2}.
\end{equation}
\end{subequations}
From \eqref{eq2025010202}, we can derive that

\begin{subequations}
\begin{equation}\label{eq2025010203}
\hspace{2cm}{\bs \sigma}^{\top}_{k}{\bs g}^{(k-1)}_{k-1} = 0   \quad\quad   \Rightarrow  \quad\quad  \triangle{\bs g}^{(k)}_{*} = {\bs 0}, \vspace{0.1cm}
\end{equation}
\begin{equation}\label{eq2025010204}
\hspace{2cm}\hspace{-1.7cm}2\triangle f_{k} + {\bs \sigma}^{\top}_{k}{\bs G}^{(k-1)}{\bs \sigma}_k = 0   \quad\quad   \Rightarrow  \quad\quad  \triangle{\bs G}^{(k)}_{*} = {\bs 0}.\vspace{0.15cm}
\end{equation}
\end{subequations}
In other words, when ${\bs \sigma}^{\top}_{k}{\bs g}^{(k-1)}_{k-1} = 0$, the gradient at the previous iteration can be used to replace the gradient at the $k$-th iteration. When $\triangle f_{k} + {\bs \sigma}^{\top}_{k}{\bs G}^{(k-1)}{\bs \sigma}_k = 0$, the Hessian matrix at the $k$-th iteration can be replaced by the Hessian matrix at the previous iteration.

Next, based on \eqref{eq20241227001}, we focus on a bounded analysis for constrained conditions shown in \eqref{eq2024122505} with $k-n+1\leq i \leq k-1$.
Let
\begin{equation}\label{eq2025012201}
\mathcal{E}^{(1)}_i := {\bs \sigma}^{\top}_{i}\triangle{\bs G}^{(k)}{\bs \sigma}_{i} - \hat{{\bs \rho}}^{(k)}_{i},
\end{equation}
\begin{equation}\label{eq2025012202}
\mathcal{E}^{(2)}_i := {\bs \sigma}^{\top}_{i}\triangle {\bs g}^{(k)} - {\bs \sigma}^{\top}_{i} \triangle{\bs G}^{(k)}({\bs \tau}_k - {\bs \tau}_i) - {\bs \epsilon}^{(k)}_{i}.
\end{equation}
Substituting \eqref{eq2024122702}, \eqref{eq2025010202}, \eqref{eq2025010202} and \eqref{eq2024122408} into \eqref{eq2025012201} becomes
\begin{equation}\label{eq2025012203}
\begin{split}
\mathcal{E}^{(1)}_{i} & = \frac{(2\triangle f_{k} + {\bs \sigma}^{\top}_{k}{\bs G}^{(k-1)}{\bs \sigma}_k)({\bs \sigma}^{\top}_i{\bs \sigma}_k)^2}{\|{\bs \sigma}_k\|^4_2} - 2{\bs \sigma}^{\top}_{i}({\bs g}^{(k-1)}_{i-1} - {\bs g}^{(k-1)}_{k-1}) + 2\triangle f_{i} + {\bs \sigma}^{\top}_{i}{\bs G}^{(k-1)}{\bs \sigma}_{i}\\
~   & = -\frac{\nu_k\eta_k\|{\bs \sigma}_k\|^4_2({\bs \sigma}^{\top}_i{\bs \sigma}_k)^2}{2\|{\bs \sigma}_k\|^4_2} - 2{\bs \sigma}^{\top}_{i}({\bs g}^{(k-1)}_{i-1} - {\bs g}^{(k-1)}_{k-1}) + 2{\bs \sigma}^{\top}_{i}{\bs g}^{(k)}_{i-1} + 2{\bs \sigma}^{\top}_{i}{\bs G}^{(k)}{\bs \sigma}_{i}\\
~ &= -\frac{1}{2}\nu_k\eta_k({\bs \sigma}^{\top}_i{\bs \sigma}_k)^2 + 2{\bs \sigma}^{\top}_{i}{\bs g}^{(k-1)}_{k-1}+2{\bs \sigma}^{\top}_{i}{\bs G}^{(k)}{\bs \sigma}_{i}.
\end{split}
\end{equation}
On the other hand, with \eqref{eq2024122302}, \eqref{eq2024122407}, \eqref{eq2025012001} and \eqref{eq2025010202}, we can derive that
\begin{equation}\label{eq2025012204}
\begin{split}
\mathcal{E}^{(2)}_{i} = & -{\bs \sigma}^{\top}_i\frac{{\bs \sigma}^{\top}_k{\bs g}^{(k-1)}_{k-1}{\bs \sigma}_k}{\|{\bs \sigma}_k\|^2_2} + {\bs \sigma}^{\top}_{i} \frac{(2\triangle f_{k} + {\bs \sigma}^{\top}_{k}{\bs G}^{(k-1)}{\bs \sigma}_k){\bs \sigma}_k{\bs \sigma}^{\top}_k}{\nu_k\|{\bs \sigma}_k\|^4_2}({\bs \tau}_k - {\bs \tau}_i) \\
~   &\, - ({\bs \sigma}^{\top}_{i}{\bs G}^{(k-1)}({\bs \tau}_k - {\bs \tau}_i) - {\bs \sigma}^{\top}_{i}{\bs g}^{(k-1)}_{k-1})\\
~  =&\; {\bs \sigma}^{\top}_i({\bs g}^{(k-1)}_{k-1} - \frac{{\bs \sigma}^{\top}_k{\bs g}^{(k-1)}_{k-1}{\bs \sigma}_k}{\|{\bs \sigma}_k\|^2_2}) + {\bs \sigma}^{\top}_i(\frac{\eta_k\|{\bs \sigma}_k\|^4_2{\bs \sigma}_k{\bs \sigma}^{\top}_k}{2\nu_k\|{\bs \sigma}_k\|^4_2} - {\bs G}^{(k-1)})({\bs \tau}_k - {\bs \tau}_i)\\
~  = &\; {\bs \sigma}^{\top}_i({\bs g}^{(k-1)}_{k-1} - \frac{{\bs \sigma}^{\top}_k{\bs g}^{(k-1)}_{k-1}{\bs \sigma}_k}{\|{\bs \sigma}_k\|^2_2} + (\frac{\eta_k{\bs \sigma}_k{\bs \sigma}^{\top}_k}{2\nu_k} - {\bs G}^{(k-1)})\sum_{j=i+1}^{k}{\bs \sigma}_j).
\end{split}
\end{equation}
When ${\bs g}^{(k)}$ and ${\bs G}^{(k)}$ is bounded for each iteration, from \eqref{eq2025012203} and \eqref{eq2025012204} we can obtain
\begin{equation}\label{eq2025012205}
|\mathcal{E}^{(r)}_i| = \mathcal{O}(\|{\bs \sigma}_i\|_2),  \quad\,    r = 1, 2.
\end{equation}
From \eqref{eq2025012205}, we can conclude that when $\|{\bs \sigma}_i\| \rightarrow 0$ (i.e., the iteration is convergent), the constrained conditions $|\mathcal{E}^{(r)}_i| $ will tend to zero. In other words, the simplified constrained  problem \eqref{eq20241227001} will be equivalent to the original constrained problem \eqref{eq2024122505} with $\|{\bs \sigma}_i\| \rightarrow 0$, which means that our proposed model \eqref{eq20241227001} is feasible for solving \eqref{eq2024121301}. Moreover, based on the proposed model \eqref{eq20241227001}, ${\bs g}^{(k)}$ and ${\bs G}^{(k)}$ used in the iteration can be explicitly updated by using \eqref{eq2025010202}, indicating that the computational efficiency will be improved greatly.\vspace{0.15cm}

Finally, since ${\bs g}^{(k)}$ and ${\bs G}^{(k)}$ is used to update the model points, it is very necessary to estimate $\|{\bs g}^{(k)}_{i} - \nabla f({\bs x})\|_2$ and $\| {\bs G}^{(k)} - \nabla^2f({\bs x}) \|_2$. Therefore, we give a following theorem to show approximation results on them.

\begin{thm}\label{thm2025012801}
It is assumed that $\nabla^2 f({\bs x})$ is Lipschitz continuous with a positive constant $L_{f}$. Let $\mathcal{B}(\hat{{\bs x}}, \rho)$ be a ball including model points $\{ {\bs x}_i \}_{i = k-n+1}^k$, where $\hat{{\bs x}}$ represents the center of the ball, and $\rho$ represents the radius of the ball. For the model \eqref{eq2024122505}, we have
\begin{equation}\label{eq2025020903}
\begin{cases}\vspace{0.2cm}
\|{\bs g}^{(k)}_{i} - \nabla f({\bs x})\|_2 \leq 2 \mathcal{L}_1\delta^2_1    \\
\| {\bs G}^{(k)} - \nabla^2f({\bs x}) \|_2 \leq \mathcal{L}_1\delta_1,
\end{cases}
\quad\;   \forall {\bs x} \in \mathcal{B}(\hat{{\bs x}}, \rho).
\end{equation}
On the other hand, for the model \eqref{eq20241227001}, we have
\begin{equation}\label{eq2025020904}
\begin{cases}\vspace{0.2cm}
\|{\bs g}^{(k)}_{k} - \nabla f({\bs x})\|_2 \leq 2 \mathcal{L}_2\delta^2_2    \\
\| {\bs G}^{(k)} - \nabla^2f({\bs x}) \|_2 \leq \mathcal{L}_2\delta_2,
\end{cases}
\quad\;  \forall {\bs x} \in \mathcal{B}(\hat{{\bs x}}, \rho).
\end{equation}
In \eqref{eq2025020903}-\eqref{eq2025020904},
$\mathcal{L}_1$ and $\mathcal{L}_2$ are positive constants depending on $L_f,n$ and $L_f$ respectively, and
\begin{equation}
\delta_1 = \max\limits_{k-n+1 \leq i \leq k}\|{\bs x}_i - {\bs x}\|_2,   \quad\quad   \delta_2 = \max\{ \|{\bs x}_{k-1} - {\bs x}\|_2, \|{\bs x}_k - {\bs x}\|_2 \}.
\end{equation}
\end{thm}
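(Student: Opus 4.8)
The plan is to treat this as a standard interpolation-error estimate for quadratic models, exploiting the fact that the model $Q^{(k)}$ matches $f$ (in the sense of the constraints \eqref{eq2025020902} and \eqref{eq2024122401}) at the model points, and that $\nabla^2 f$ is $L_f$-Lipschitz. First I would fix a point ${\bs x}\in\mathcal B(\hat{\bs x},\rho)$ and expand $f$ by Taylor's theorem with integral remainder around ${\bs x}$: for each model point ${\bs x}_i$,
\begin{equation*}
f({\bs x}_i)=f({\bs x})+\nabla f({\bs x})^\top({\bs x}_i-{\bs x})+\tfrac12({\bs x}_i-{\bs x})^\top\nabla^2 f({\bs x})({\bs x}_i-{\bs x})+R_i,
\end{equation*}
with $|R_i|\le \tfrac{L_f}{6}\|{\bs x}_i-{\bs x}\|_2^3\le \tfrac{L_f}{6}\delta_1^3$. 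Subtracting the analogous identity for the quadratic $Q^{(k)}$ (which has gradient ${\bs g}^{(k)}_k$ at some reference point and Hessian ${\bs G}^{(k)}$, so it equals its own second-order Taylor polynomial exactly) and using $f({\bs x}_i)=Q^{(k)}({\bs x}_i)$, I obtain a linear system in the error quantities ${\bs e}_g:=\nabla f({\bs x})-{\bs g}^{(k)}({\bs x})$ (the model gradient evaluated at ${\bs x}$) and ${\bs E}_G:=\nabla^2 f({\bs x})-{\bs G}^{(k)}$: for each $i$,
\begin{equation*}
{\bs e}_g^\top({\bs x}_i-{\bs x})+\tfrac12({\bs x}_i-{\bs x})^\top {\bs E}_G({\bs x}_i-{\bs x}) = -R_i = \mathcal O(\delta_1^3).
\end{equation*}
The point of the $2n$ constraints is precisely that they supply enough independent equations of this type (differences along the directions ${\bs \sigma}_i$, plus the orthogonality conditions) to pin down both ${\bs e}_g$ and ${\bs E}_G$.

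Next I would cast this as a least-squares / poisedness argument. Collecting the $n$ difference equations (coming from $\triangle f_i$ and $\triangle {\bs g}^{(k)}$ along ${\bs \sigma}_{k-n+1},\dots,{\bs \sigma}_k$) gives a matrix $\Sigma$ whose rows are the ${\bs \sigma}_i^\top$; the model's defining conditions force $\Sigma\,{\bs e}_g$ and the quadratic forms ${\bs \sigma}_i^\top {\bs E}_G{\bs \sigma}_i$ to be bounded by $\mathcal O(\delta_1^3)$ after the Taylor expansion. Assuming the model points are $\Lambda$-poised (equivalently $\Sigma$ is well conditioned, with $\|\Sigma^{-1}\|$ controlled in terms of $\rho$ and a poisedness constant absorbed into $\mathcal L_1$), one inverts to get $\|{\bs E}_G\|_2\le \mathcal L_1\delta_1$ from the $n$ quadratic constraints and the symmetry of ${\bs E}_G$, and then $\|{\bs e}_g\|_2\le 2\mathcal L_1\delta_1^2$ by feeding the Hessian bound back into the gradient equations (each gradient equation loses one power of $\delta_1$ relative to the $\delta_1^3$ residual because it is divided through by $\|{\bs \sigma}_i\|\sim\delta_1$, and the Hessian term ${\bs E}_G({\bs x}_i-{\bs x})$ contributes another $\mathcal O(\delta_1^2)$). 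The constant $2$ in the gradient bound is exactly the accumulation of the two contributions (Taylor residual and propagated Hessian error). For \eqref{eq2025020904} the argument is the same but uses only the two simplified constraints in \eqref{eq20241227001} along the single direction ${\bs \sigma}_k$ together with the boundedness estimate \eqref{eq2025012205} already established for the dropped $2n-2$ constraints, so $\delta_1$ is replaced by $\delta_2=\max\{\|{\bs x}_{k-1}-{\bs x}\|_2,\|{\bs x}_k-{\bs x}\|_2\}$ and the poisedness constant simplifies (no $n$ dependence beyond what \eqref{eq2025012205} hides), giving $\mathcal L_2$.

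The main obstacle, and the step I would spend the most care on, is controlling the conditioning of the linear system that recovers ${\bs e}_g$ and ${\bs E}_G$ — i.e. making precise the geometric (poisedness) hypothesis on $\{{\bs x}_i\}_{i=k-n+1}^k$ that is implicitly needed for $\|\Sigma^{-1}\|=\mathcal O(1/\rho)$ and for the quadratic forms ${\bs \sigma}_i^\top{\bs E}_G{\bs \sigma}_i$ to determine ${\bs E}_G$ (this requires the normalized directions ${\bs \sigma}_i/\|{\bs \sigma}_i\|$ to span enough of the symmetric-matrix test space, which in general needs more than $n$ directions unless one also uses the mixed information in the first family of constraints). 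I would either add a non-degeneracy assumption on the model points (uniformly bounded $\Lambda$-poisedness, absorbed into $\mathcal L_1,\mathcal L_2$) or invoke the unique-solvability of \eqref{eq2024122505} established via Theorem \ref{thm2025010101} to argue that the variational (least-norm) solution automatically satisfies the bound with a constant depending only on $L_f$ and $n$. The rest — the Taylor expansions, the bookkeeping of powers of $\delta$, and the triangle inequalities — is routine and I would not grind through it in detail.
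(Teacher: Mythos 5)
Your proposal follows essentially the same route as the paper: Taylor expansion of $f$ about the evaluation point ${\bs x}$, difference equations along the steps ${\bs \sigma}_i$ obtained from the interpolation conditions $f({\bs x}_i)=Q^{(k)}({\bs x}_i)$, assembly of a linear system in the gradient and Hessian error quantities, and inversion after rescaling by powers of $\delta$, with the Hessian bound recovered at order $\delta$ and the gradient bound at order $\delta^2$. The conditioning/poisedness issue you flag as the main obstacle is precisely what the paper absorbs (without further justification) into the factor $\|\bm{C}^{-1}_{*}\|_2$ (resp.\ $\|(\bm{M}^{\top}_{*})^{-1}\|_2$) defining $\mathcal{L}_1$ and $\mathcal{L}_2$, so your explicit call for a non-degeneracy assumption on the model points is, if anything, more careful than the paper's own treatment.
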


\begin{proof}
Let $Q^{(k)}_{i} := Q^{(k)}({\bs x}_i)$, and we have
\begin{equation}\label{DFO_formu34}
Q^{(k)}_{i}-Q(\bs{x})=f_{i}-f(\bs{x})+\kappa^{f}(\bs{x}), \quad \quad  k-n+1\leq i \leq k,
\end{equation}
where $\kappa^{f}(\bs{x}) = f(\bs{x})-Q(\bs{x})$. Based on the Taylor Series Formula, we obtain
\begin{equation}\label{DFO_formu25}
f_{i}-f(\bs{x}) =\nabla f(\bs{x})^{\top} (\bs{x}_{i}-\bs{x})+\frac{1}{2}(\bs{x}_{i}-\bs{x})^{\top}  \nabla^2 f(\bs{x})(\bs{x}_{i}-\bs{x}) + \mathcal{O}\left(\|\bs{x}_{i}-\bs{x}\|^3_2\right).
\end{equation}
Based on the Lemma 4.14 shown in \cite{dennisnumerical1996} and the Lipschitz condition of $\nabla^2 f(\bs{x})$,
we get
\begin{equation}\label{eq2025021002}
\begin{aligned}
\mathcal{O}\left(\|\bs{x}_{i}-\bs{x}\|^3_2\right)
 \leq L_{f} \delta_1^3.
\end{aligned}
\end{equation}
Meanwhile, with \eqref{eq2025020902}, we can obtain
\begin{equation}\label{eq2025020901}
\begin{aligned}
Q^{(k)}_{i}-Q^{(k)}_{i-1}=\left(f_{i}-f(\bs{x})\right)-\left(f_{i-1}-f(\bs{x})\right).
\end{aligned}
\end{equation}
Based on \eqref{eq2024122702}, \eqref{DFO_formu25} and \eqref{eq2025020901}, it can be derived that
\begin{equation}\label{eq2025020905}
\begin{aligned}
&
{\bs \sigma}^{\top}_{i}\left({\bs g}^{(k)}_{i} -
\nabla f(\bs{x}) -\left(\nabla^2 f(\bs{x})-\bs{G}^{(k)}\right)(\bs{x}_{i-1}-\bs{x}) \right)+ \frac{1}{2}{\bs \sigma}^{\top}_{i}\left({\bs G}^{(k)}-\nabla^2 f(\bs{x})\right){\bs \sigma}_{i}\\
&+{\bs \sigma}^{\top}_{i} \left({\bs g}^{(k)}_{i-1}-{\bs g}^{(k)}_{i}-{\bs G}^{(k)} (\bs{x}_{i-1}-\bs{x})\right)=\mathcal{O}\left(\delta_1^3\right), \quad\;  k-n+1 \leq i \leq k.
\end{aligned}
\end{equation}
From \eqref{eq2025021002}, the right-hand side of \eqref{eq2025020905} is bounded by $2L_{f} \delta_1^3$. To facilitate the subsequent analysis, \eqref{eq2025020905} is reformulated into a following form
\begin{equation}\label{eq2025020906}
{\bs C}{\bs Y} = \mathcal{O}(\delta_1^3){\bs 1}_{n},
\end{equation}
where
\begin{equation}\label{DFO_formu15}
\begin{aligned}
& {\bs 1}_n = (1, \cdots, 1)^{\top},  \quad\;
\bm{C}_{1}=\text{diag}\left({\bm \sigma}_{k-n+1}^{\top}~~{\bm\sigma}_{k-n+2}^{\top}~~\cdots ~~{\bm \sigma}_{k}^{\top} \right),\\
& \bm{C}_{2}=\left[({\bm \sigma}_{k-n+1}\otimes{\bm \sigma}_{k-n+1})^{\top}~~({\bm \sigma}_{k-n+2}\otimes{\bm \sigma}_{k-n+2})^{\top}~~\cdots ~~({\bm \sigma}_{k}\otimes{\bm \sigma}_{k})^{\top}\right]^{\top},\\
&\bs{s}_{i}(\bs{x})={\bs g}^{(k)}_{i-1}-{\bs g}^{(k)}_{i}-{\bs G}^{(k)} (\bs{x}_{i-1}-\bs{x}),~~ \;\; h_{jl}(\bs{x})= \left({\bs G}^{(k)}-\nabla^2 f(\bs{x})\right)_{jl},  \quad\; 1\leq j , l \leq n,
\\
&\bs{u}_{i}(\bs{x}) = {\bs g}^{(k)}_{i} -
\nabla f(\bs{x}) - \left(\nabla^2 f(\bs{x})-\bs{G}^{(k)}\right)(\bs{x}_{i-1}-\bs{x}), \\
&\bs{Y}=[ \bs{Y}_{1}~~\bs{Y}_{2} ~~\bs{Y}_{3} ]^{\top}, \quad\;
\bs{Y}_{1}= [\bs{s}_{k-n+1}(\bs{x})~~\bs{s}_{k-n+2}(\bs{x})~~\cdots~~\bs{s}_{k}(\bs{x})]^{\top} , \\
&\bs{Y}_{2}= [\bs{u}_{k-n+1}(\bs{x})~~\bs{u}_{k-n+2}(\bs{x})~~\cdots~~\bs{u}_{k}(\bs{x})]^{\top} ,\\
&\bs{Y}_{3} = [h_{11}(\bs{x})~~\cdots~~h_{1n}(\bs{x})~~h_{21}(\bs{x})~~\cdots~~ h_{nn}(\bs{x})]^{\top}.\\
\end{aligned}
\end{equation}
To remove the dependence of ${\bs C}$ on $\delta_1$, we introduce the scaled matrix
\begin{equation}\label{DFO_formu20}
\bm{C}_{*}=\bm{C}
\begin{bmatrix}\vspace{0.15cm}
{\bm{D}^{-1}_{1}}  & \bm{0} & \bm{0}\\
\bm{0} & {\bm{D}^{-1}_{1}}  &  \bm{0}\\
\bm{0} & \bm{0}  &{\bm{D}^{-1}_{2}}
\end{bmatrix},
\end{equation}
where
$\bm{D}_{1}= \delta_1 \bs{I}_{n^2}$, $\bm{D}_{2}=  \delta_1^2 \bs{I}_{n^2}$,  $\bs{I}_{n^2}$ is a $n^2\times n^2$ identity matrix. It is worth pointing out that all elements of $\bm{C}_{*}$ are less than 1, and independent of $\delta_1$. As a result, the left-hand side of \eqref{eq2025020906} is equivalent to
\begin{equation}\label{DFO_formu3}
\bm{C}_{*}
\begin{bmatrix}\vspace{0.15cm}
{\bm{D}_{1}}  & \bm{0} & \bm{0}\\
\bm{0} &  {\bm{D}_{1}}   & \bm{0}  \\
\bm{0}  & \bm{0}  &{\bm{D}_{2}}
\end{bmatrix}
\begin{bmatrix}\vspace{0.15cm}
\bs{Y}_{1}\\
\bs{Y}_{2}\\
\bs{Y}_{3}
\end{bmatrix}
=\bm{C}_{*}
\begin{bmatrix}\vspace{0.15cm}
\delta_1 \bs{I}_{n^2 }\bs{Y}_{1}\\
\delta_1 \bs{I}_{n^2 }\bs{Y}_{2}\\
\delta^2_1 \bs{I}_{n^2 } \bs{Y}_{3}
\end{bmatrix}.
\end{equation}
Based on \eqref{eq2025021002}, \eqref{eq2025020906} and \eqref{DFO_formu3}, we can obtain
\begin{equation}\label{DFO_formu4}
\left\|
\begin{bmatrix}\vspace{0.15cm}
\delta_1 \bs{I}_{n^2 }\bs{Y}_{1}\\
\delta_1 \bs{I}_{n^2 }\bs{Y}_{2}\\
\delta^2_1 \bs{I}_{n^2 } \bs{Y}_{3}
\end{bmatrix}\right\|_{2} \leq 2 n^{\frac{1}{2}}L_{f} \delta_1^3\|\bm{C}^{-1}_{*}\|_2,
\end{equation}
where $\|\bm{C}^{-1}_{*}\|_2$ denotes the $2$-norm of a pseudo-inverse matrix $\bm{C}_{*}$, and $\|\bm{C}^{-1}_{*}\|_2=\frac{1}{\lambda_{\min}(\bm{C}_{*})}$, ($\lambda_{\min}(\bm{C}_{*})$ denotes the smallest nonzero singular value of $\bm{C}_{*}$, the reader is referred to Chapter 14 in \cite{Gallier2011} for more details). From \eqref{DFO_formu4}, we further derive that
\begin{equation}\label{eq2025020907}
\begin{cases}\vspace{0.15cm}
\left\|  \bs{Y}_{2}\right\|_2 \leq  2 n^{\frac{1}{2}}L_{f} \delta^2_1\|\bm{C}^{-1}_{*}\|_2,
\\
\left\|  \bs{Y}_{3}\right\|_2 \leq 2 n^{\frac{1}{2}}L_{f} \delta_1 \|\bm{C}^{-1}_{*}\|_2.
\end{cases}
\end{equation}
Based on the relationship between the norms of matrices $\|\cdot\|_2 \leq \|\cdot\|_{F}$, from \eqref{eq2025020907} we can obtain
\begin{equation}\label{eq2025021001}
\begin{aligned}
&\left\|{\bs G}^{(k)}-\nabla^2 f(\bs{x}) \right\|_2  \leq \left\| {\bs G}^{(k)}-\nabla^2 f(\bs{x})\right\|_{F}= \left\|  \bs{Y}_{3} \right\|_2 \leq 2n^{\frac{1}{2}}L_{f} \delta_1 \|\bm{C}^{-1}_{*}\|_2.
\end{aligned}
\end{equation}
Similarly, from \eqref{eq2025020907} the following result can also be obtained
\begin{equation}\label{DFO_formu28}
\begin{aligned}
&\left\| {\bs g}^{(k)}_{i} -
\nabla f(\bs{x})\right\|_2  \leq  \left\|  \bs{Y}_{2}\right\|_2
+ \left\|{\bs G}^{(k)}-\nabla^2 f(\bs{x}) \right\|_2  \delta_1 \leq 4 n^{\frac{1}{2}}L_{f} \delta^2_1\|\bm{C}^{-1}_{*}\|_2.
\end{aligned}
\end{equation}
Let $\mathcal{L}_1 := 2 n^{\frac{1}{2}}L_{f} \|\bm{C}^{-1}_{*}\|_2$, and \eqref{eq2025020903} is proved with \eqref{eq2025021001}-\eqref{DFO_formu28}.

Next, we will consider to prove \eqref{eq2025020904}.
Since $Q^{(k)}_{k}=f_{k}$ and $Q^{(k-1)}_{k-1}=f_{k-1}$, we have
\begin{equation}\label{DFO_formu47}
\begin{aligned}
Q^{(k)}_{k}-Q^{(k-1)}_{k-1}= & \left(\bs{g}^{(k)}_{k}-\bs {g}^{(k-1)}_{k-1}\right)^{\top} \bs{x}_{k-1} + \left(\bs {g}^{(k)}_{k}\right)^{\top} \left(\bs{x}_{k}-\bs{x}_{k-1}\right)
-\left(\bs{x}_{k}-\bs{x}_{k-1}\right)^{\top} \bs{G}^{(k)} \bs{x}_{k-1}\\
& \,
-\frac{1}{2} \left(\bs{x}_{k}-\bs{x}_{k-1}\right)^{\top} \bs{G}^{(k)} \left(\bs{x}_{k}-\bs{x}_{k-1}\right)
-\frac{1}{2} \bs{x}_{k-1} \left(\bs{G}^{(k)}-\bs{G}^{(k-1)} \right)\bs{x}_{k-1}  .
\end{aligned}
\end{equation}
On the other hand, similar to \eqref{eq2025020905}, we can derive
\begin{equation}\label{DFO_formu47}
\begin{aligned}
&\left(\bs{g}^{(k)}_{k}-\bs {g}^{(k-1)}_{k-1}\right)^{\top} \bs{x}_{k-1}-\frac{1}{2} \bs{x}_{k-1}^{\top} \left(\bs{G}^{(k)}-\bs{G}^{(k-1)} \right) \bs{x}_{k-1}
- \left(\bs{x}_{k}-\bs{x}_{k-1}\right)^{\top} \bs{G}^{(k)}\bs{x}\\
&
+ \left(\bs{x}_{k}-\bs{x}_{k-1}\right)^{\top}  \nabla^2 f(\bs{x})\left(\bs{x}_{k} +\bs{x}_{k-1} -2\bs{x}\right)
-\frac{1}{2} \left(\bs{x}_{k}-\bs{x}_{k-1}\right)^{\top} \left( \bs{G}^{(k)} -\nabla^2 f(\bs{x})  \right)\left(\bs{x}_{k}-\bs{x}_{k-1}\right) \\
&+
\left(\bs{x}_{k}-\bs{x}_{k-1}\right)^{\top}\left( \bs {g}^{(k)}_{k}-  \nabla f(\bs{x}) -\left(\bs{G}^{(k)}-\nabla^2 f(\bs{x})\right)(\bs{x}_{k-1}-\bs{x}) \right) =\mathcal{O}\left(\delta_2^3\right).
\end{aligned}
\end{equation}
Similar to \eqref{eq2025021002}, the upper bound of the right-hand side of \eqref{DFO_formu47} is described by
\begin{equation}\label{eq2025021003}
\begin{aligned}
\mathcal{O}\left(\|\bs{x}_{k}-\bs{x}\|^3_2\right)
+\mathcal{O}\left(\|\bs{x}_{k-1}-\bs{x}\|^3_2\right) \leq L_{f} \delta_2^3 + L_{f} \delta_2^3=2L_{f} \delta_2^3.
\end{aligned}
\end{equation}
To write \eqref{DFO_formu47} as a simpler form, some notations should be introduced as follows:
\begin{equation}\label{DFO_formu49}
\bm{M}  :=
\begin{bmatrix}\vspace{0.15cm}
\bm{M}_{1} &  {0}   &  \bm{0}   &  \bm{0} \\
{0}  & \bm{M}_{2}  &  \bm{0}   &  \bm{0}\\
 \bm{0}   &  \bm{0}   &   \bm{M}^{\top}_{3}  & \bm{0} \\
  \bm{0}   &  \bm{0}   & \bm{0}  &   \bm{M}^{\top}_{4}   \\
\end{bmatrix}
\begin{bmatrix}\vspace{0.03cm}
1 \\
1 \\
1 \\
1
\end{bmatrix}, \quad\; \bm{M}_{1}=    \left(\bs{g}^{(k)}_{k}-\bs {g}^{(k-1)}_{k-1}\right)^{\top} \bs{x}_{k-1},\\
\end{equation}
\begin{equation}\label{DFO_formu50}
\begin{aligned}
&\bm{M}_{2}=    -\frac{1}{2} \bs{x}_{k-1}^{\top} \left(\bs{G}^{(k)}-\bs{G}^{(k-1)} \right)\bs{x}_{k-1}- \left(\bs{x}_{k}-\bs{x}_{k-1}\right)^{\top} \left(\bs{G}^{(k)} \bs{x}-  \nabla^2 f(\bs{x})\left(\bs{x}_{k} +\bs{x}_{k-1} -2\bs{x}\right)\right),\\
&\bm{M}_{3}= \bs{x}_{k}-\bs{x}_{k-1} ,
\quad\; \bm{M}_{4}=\left(\bs{x}_{k}-\bs{x}_{k-1}\right)\otimes \left(\bs{x}_{k}-\bs{x}_{k-1}\right),
    \end{aligned}
\end{equation}
\begin{equation}\label{DFO_formu51}
\begin{aligned}
&\bs{V}=[ 1~~1~~\bs{V}_{1} ~~\bs{V}_{2} ]^{\top}, \quad\; \bs{V}_{1}= \left[{\bs g}^{(k)}_{k} -
\nabla f(\bs{x}) - \left(\bs{G}^{(k)}-\nabla^2 f(\bs{x})\right)(\bs{x}_{k-1}-\bs{x})\right]^{\top},\\
&\bs{V}_{2} = \left[h_{11}(\bs{x})~~\cdots~~h_{1n}(\bs{x})~~h_{21}(\bs{x})~~\cdots~~ h_{nn}(\bs{x})\right]^{\top} , \quad\;  h_{jl}(\bs{x})= \left({\bs G}^{(k)}-\nabla^2 f(\bs{x})\right)_{jl} \quad 1\leq j , l \leq n.
\end{aligned}
\end{equation}
Then, \eqref{DFO_formu47} is formed as
\begin{equation}\label{DFO_formu56}
\bm{M}^{\top} \bs{V}=\mathcal{O}\left(\delta_2^3\right).
\end{equation}
A scale matrix is introduced as follow:
\begin{equation}\label{DFO_formu52}
\bm{M}_{*}=
\begin{bmatrix}\vspace{0.15cm}
1 & 0 & \bm{0} & \bm{0}\\
0 & 1 & \bm{0} & \bm{0}\\
\bm{0}& \bm{0} & {\bm{R}^{-1}_{1}}  & \bm{0}\\
\bm{0}  & \bm{0} & \bm{0} &{\bm{R}^{-1}_{2}}
\end{bmatrix}\bm{M},
\end{equation}
where $\bm{R}_{1}= \delta_2\bs{I}_{n}$ and $\bm{R}_{2}=  \delta_2^2 \bs{I}_{n}$.
Similar to the derivations for \eqref{DFO_formu3}-\eqref{eq2025020907}, we can obtain
\begin{subequations}
\begin{equation}\label{DFO_formu68}
\left\| {\bs G}^{(k)}-\nabla^2 f(\bs{x})\right\|_2  \leq \left\| {\bs G}^{(k)}-\nabla^2 f(\bs{x})\right\|_{F}=\left\|  \bs{V}_{2}\right\|_2
 \leq  2 L_{f}  \delta_2 \|(\bm{M}^{\top}_{*})^{-1}\|_2,
\end{equation}
\begin{equation}\label{DFO_formu60}
\begin{aligned}
\left\| {\bs g}^{(k)}_{k} -
\nabla f(\bs{x})\right\|_2
\leq \left\|  \bs{V}_{1}\right\|_2
+ \left\| \bs{V}_{2} \right\|_2  \delta_2 \leq  4 L_{f} \delta_2^2\|(\bm{M}^{\top}_{*})^{-1}\|_2 .
\end{aligned}
\end{equation}
\end{subequations}
Let $\mathcal{L}_2 := 2L_{f}\|(\bm{M}^{\top}_{*})^{-1}\|_2$, \eqref{eq2025020904} is proved with \eqref{DFO_formu68}-\eqref{DFO_formu60}.

\end{proof}

\vspace{-0.35cm}
\section{A novel algorithm for computing \eqref{eq2024121301}}\label{sect3}

In this section, based on the results presented in section \ref{sect2}, we focus on proposing a novel algorithm for computing \eqref{eq2024121301}. As shown in Fig.\ref{figure2025011301}, the main configuration process is divided into three steps.

\begin{figure}[h]
\centering
\includegraphics[width=8cm]{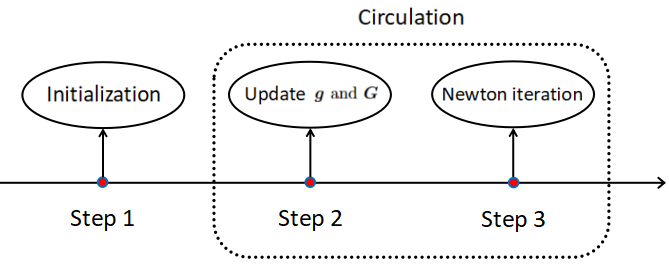}\\\vspace{-0.2cm}
\caption{A configuration of our proposed algorithm for computing \eqref{eq2024121301}.}\label{figure2025011301}
\end{figure}

\begin{center}
\begin{minipage}{0.8\linewidth}
\begin{breakablealgorithm}
\setstretch{1.1}
\caption{A novel iterative algorithm for computing \eqref{eq2024121301}}\label{algorithm2025011301}
\begin{algorithmic}[1]
\Require Given $\epsilon$
\Ensure The solution, denoted by ${\bs x}^{*}$
\State Randomly generate $n$ model points, denoted by $\{{\bs x}_i\}_{i=0}^{n-1}$;    \quad\; {\color{blue}$\boldsymbol{\triangleright}$} Initialization
\State \textbf{For} $k = n-1,~ n,~ n+1,~ \cdots$    \quad\quad\quad\quad\quad\quad\, {\color{blue}$\boldsymbol{\triangleright}$} Circulation
\State \hspace{0.3cm} \textbf{If} $k==n-1$, \textbf{then}
\State \hspace{0.6cm} Set ${\bs G}^{(k)} = {\bs I}$, and compute ${\bs g}^{(k)}$ by using model points $\{{\bs x}_i\}_{i=0}^{n-1}$;
\State \hspace{0.3cm} \textbf{Else}
\State \hspace{0.6cm} Go to the line \textbf{8};
\State \hspace{0.3cm} \textbf{Endif}
\State \hspace{0.3cm} Solve \eqref{eq2024122703} by using the GMRES;
\State \hspace{0.3cm} Compute $\triangle{\bs g}^{(k)}$ and $\triangle{\bs G}^{(k)}$ by \eqref{eq2024122503} and \eqref{eq2024122504};
\State \hspace{0.3cm} ${\bs g}^{(k)}$ and ${\bs G}^{(k)}$ is updated by \eqref{eq2024122303};   \quad\quad\;\; {\color{blue}$\boldsymbol{\triangleright}$} Update ${\bs g}$ and ${\bs G}$
\State \hspace{0.3cm} \textbf{If} $\|{\bs g}^{(k)}_{k}\|_2< \epsilon$, \textbf{then}
\State \hspace{0.6cm} ${\bs x}^{*} \leftarrow {\bs x}_{k}$, and break;
\State \hspace{0.3cm} \textbf{Else}
\State \hspace{0.3cm} Compute ${\bs x}_{k+1} = {\bs x}_{k}$ - ${\bs G}^{(k)}$$\backslash {\bs g}^{(k)}_{k}$;  \quad\quad\;\quad\hspace{-0.05cm} {\color{blue}$\boldsymbol{\triangleright}$} Newton iteration
\State \hspace{0.3cm} Update $\{{\bs x}_i\}_{i=k-n+2}^{k+1}$ by deleting ${\bs x}_{k-n+1}$, and go to the line \textbf{2}.
\State \hspace{0.3cm} \textbf{Endif}
\State \textbf{End}
\State Return ${\bs x}^{*}$
\end{algorithmic}
\end{breakablealgorithm}
\end{minipage}
\end{center}
To be specific,
the main purpose in the step 1 is to initialize model points $\{{\bs x}_i\}_{i = k-n+1}^{k}$. In the step 2, ${\bs g}^{(k)}$ and ${\bs G}^{(k)}$ shown in \eqref{eq2024122301} is updated by \eqref{eq2024122505} or \eqref{eq20241227001}. In the step 3, the classical Newton iteration is used to update model points, i.e., ${\bs x}_{k+1} = {\bs x}_{k} - {\bs G}^{(k)}\backslash{\bs g}^{(k)}_k$, and ${\bs x}_{k-n+1}$ is replaced by the point ${\bs x}_{k+1}$. Next,
${\bs g}^{(k+1)}$ and ${\bs G}^{(k+1)}$ will be computed based on model points $\{{\bs x}_i\}_{i = k-n+2}^{k+1}$. Finally, a solution of the unconstrained optimization problem \eqref{eq2024121301} is obtained by repeating the steps 2-3. For clarity, the computing process corresponding to Fig.\ref{figure2025011301} is summarized in the \textbf{Algorithm} \ref{algorithm2025011301}.

Since the simplified $\triangle {\bs g}_{*}^{(k)}$ and $\triangle {\bs G}_{*}^{(k)}$ shown in \eqref{eq2025010102} is first proposed and derived, we should state how they are used based on the \textbf{Algorithm} \ref{algorithm2025011301}. In addition, more details for the \textbf{Algorithm} \ref{algorithm2025011301} also need to be addressed. Therefore, some remarks are listed as follows:

{\em \textbf{In the line 8}:} When \eqref{eq20241227001} is used to update $\triangle{\bs g}^{(k)}_{*}$ and $\triangle{\bs G}^{(k)}_{*}$ (or $\triangle{\bs g}^{(k)}$ and $\triangle{\bs G}^{(k)}$), $\eta_{k}$ and $\theta_{k}$ are updated by using \eqref{eq2025012001}. Compared with \eqref{eq2024122505}, the simplified constrained model \eqref{eq20241227001} has a distinct advantage, i.e., it is not necessary to solve the linear system \eqref{eq2024122703}. This greatly improves the computational efficiency.

{\em \textbf{In the line 9}:} For the simplified constrained model \eqref{eq20241227001}, $\triangle{\bs g}^{(k)}$ and $\triangle{\bs G}^{(k)}$ are directly updated by using \eqref{eq2025010202}. Compared with \eqref{eq2024122505}, this not only simplifies the computational complexity, but also improves their accuracy. In addition, the parameter $\nu_{k+1}$ in \eqref{eq2025010202} is changed as follow:
\begin{equation*}
\nu_{k+1}=
\left\{
\begin{aligned}
&1.1\nu_{k},\quad \text{\ if\ }\;\|{\bs V}^{\top}_1\triangle {\bs g}_{*}^{(k)}\|_2^2 \ge 1.1 \|{\bs W}\triangle {\bs G}_{*}^{(k)}\|_{F}^2,\\
&\nu_{k},\quad \quad \,
 \text{\ if\ }\;0.9 \|{\bs W}\triangle {\bs G}_{*}^{(k)}\|_{F}^2\le \|{\bs V}^{\top}_1\triangle {\bs g}_{*}^{(k)}\|_2^2 \le 1.1 \|{\bs W}\triangle {\bs G}_{*}^{(k)}\|_{F}^2,\\
&0.9\nu_{k}, \quad\text{\ otherwise},
\end{aligned}
\right.
\end{equation*}
where the initial value of the parameter $\nu_{k}$ is equal to 1.

{\em \textbf{In the line 14}:} The classical Newton iteration is used. As we known, different quasi-Newton methods are often used to update the iteration point. Here they can also be extended in the \textbf{Algorithm} \ref{algorithm2025011301}. To be specific, when the rank-one quasi-Newton method is considered, the famous Sherman-Morrison formula should be used as follow:
\begin{equation}
({\bs G}+{\bs u}{\bs v}^{\top})^{-1} = {\bs G}^{-1} - \frac{{\bs G}^{-1}{\bs u}{\bs v}^{\top}{\bs G}^{-1}}{1+{\bs v}^{\top}{\bs G}^{-1}{\bs u}}.
\end{equation}
In addition, for a given initial guess ${\bs x}_{k-1} (k := n)$ (see the line 1), we first obtain ${\bs x}_{k}$ by using the steepest descent method. Then, we denote
\begin{equation}
{\bs p}^{(k-1)} := {\bs x}_{k} - {\bs x}_{k-1}  \quad\quad
{\bs q}^{(k-1)} : = {\bs g}^{(k)}_k - {\bs g}^{(k-1)}_{k-1},
\end{equation}
and the corresponding iteration becomes
\begin{equation}
{\bs B}^{(k)} = {\bs B}^{(k-1)} + \frac{({\bs p}^{(k-1)}-{\bs B}^{(k-1)}{\bs q}^{(k-1)})({\bs p}^{(k-1)})^{\top}{\bs B}^{(k-1)}}{({\bs p}^{(k-1)})^{\top}{\bs B}^{(k-1)}{\bs q}^{(k-1)}}
\end{equation}
\begin{equation}
\hspace{-1cm}{\bs x}_{k+1} = {\bs x}_{k} - {\bs B}^{(k)}{\bs g}^{(k)}_k
\end{equation}
where ${\bs B}^{(k)} = ({\bs G}^{(k)})^{-1}$. On the other hand, the rank-two quasi-Newton method (i.e., BFGS) can also be used. Here we don't present it, and the reader can referred to \cite{sun2006optimization} for more details.

\section{Numerical results}\label{sect5}
In this section, some numerical experiments involving smooth, derivative blasting and non-smooth problems are tested to show the efficiency of the \textbf{Algorithm} \ref{algorithm2025011301}, where we focus on updating ${\bs g}^{(k)}$ and ${\bs G}^{(k)}$ by using \eqref{eq20241227001}
, and all programs are carried out on a MacBook Pro with an Apple M3 Max chip and 64 GB of memory, running macOS 14.5\,(23F79).\vspace{0.2cm}

\hspace{-0.45cm}\textbf{Case 1. smooth problems}
\medskip

Five smooth problems are considered to illustrate the efficiency of the Algorithm \ref{algorithm2025011301}, and these problems can be found on the World-Wide Web at \href{https://www.sfu.ca/~ssurjano/}{\texttt{https://al-roomi.org/benchmarks/unconstrained}}

\begin{table}[h]\small
\centering
\setlength{\tabcolsep}{3.5pt}
\caption{Performance of \textbf{Algorithm} \ref{algorithm2025011301} and  other methods.}
\label{Table2025020701}
\begin{tabular}{llcccccccc}
\toprule
& {}           & \multicolumn{2}{c}{{Trust-Region}} & \multicolumn{2}{c}{{DFO-TR}}& \multicolumn{2}{c}{{Newton method}}  & \multicolumn{2}{c}{{\textbf{Algorithm} \ref{algorithm2025011301}}} \\
\cmidrule(lr){3-4} \cmidrule(lr){5-6} \cmidrule(lr){7-8} \cmidrule(lr){9-10}
             &  IG     & Error & CPU             & Error & CPU              & Error & CPU             & Error & CPU             \\
\midrule
{Woods}
&$\text{IG}_1$       & $6.34 e{-8}$ & 8.52             & $5.18 e{-8}$ & 9.05              & $4.92 e{-8}$ & 9.84             & $6.28 e{-9}$ & 6.13             \\
problem shown &$\text{IG}_2$       & $7.83 e{-8}$ & 8.76             & $6.25 e{-8}$ & 8.95              & $5.94 e{-8}$ & 9.21             & $6.09 e{-9}$ & 5.93             \\
in  \cite{conn1988testing} & $\text{IG}_3$       & $8.07 e{-8}$ & 9.03             & $7.52 e{-8}$ & 9.11              & $6.63 e{-8}$ & 9.54             & $5.92 e{-9}$ & 5.87             \\
\midrule
{Rosenbrock}  &$\text{IG}_1$       & $6.12 e{-8}$ & 8.64             & $5.26 e{-8}$ & 8.89              & $4.95 e{-8}$ & 9.23             & $6.19 e{-9}$ & 6.01             \\
problem shown &$\text{IG}_2$       & $7.58 e{-8}$ & 8.91             & $6.83 e{-8}$ & 9.03              & $6.04 e{-8}$ & 9.51             & $5.97 e{-9}$ & 5.73             \\
in \cite{Luksan2010} & $\text{IG}_3$       & $8.11 e{-8}$ & 9.24             & $7.63 e{-8}$ & 9.34              & $6.87 e{-8}$ & 9.62             & $5.89 e{-9}$ & 5.62             \\
\midrule
Sum of \(\lfloor n/4\rfloor\) & $\text{IG}_1$       & $8.12 e{-8}$ & 10.24            & $7.15 e{-8}$ & 10.52             & $6.98 e{-8}$ & 11.01            & $7.02 e{-9}$ & 6.41             \\
shown   & $\text{IG}_2$       & $9.34 e{-8}$ & 10.61            & $8.04 e{-8}$ & 10.78             & $7.92 e{-8}$ & 11.43            & $8.05 e{-9}$ & 6.12             \\
in \cite{J-more-test}  & $\text{IG}_3$       & $9.88 e{-8}$ & 11.05            & $8.97 e{-8}$ & 11.13             & $8.53 e{-8}$ & 11.74            & $8.19 e{-9}$ & 6.24             \\
\midrule
Sparse & $\text{IG}_1$       & $7.84 e{-8}$ & 9.84             & $6.72 e{-8}$ & 10.12             & $6.34 e{-8}$ & 10.78            & $6.53 e{-9}$ & 5.97             \\
 problem  & $\text{IG}_2$       & $8.96 e{-8}$ & 10.01            & $7.64 e{-8}$ & 10.25             & $7.21 e{-8}$ & 10.93            & $7.18 e{-9}$ & 5.79             \\
shown in  \cite{CUTEr}
 & $\text{IG}_3$       & $9.45 e{-8}$ & 10.54            & $8.34 e{-8}$ & 10.61             & $8.01 e{-8}$ & 11.31            & $7.29 e{-9}$ & 5.84             \\
\midrule
Dixon-Maany & $\text{IG}_1$       & $8.54 e{-8}$ & 10.41            & $7.32 e{-8}$ & 10.67             & $6.94 e{-8}$ & 11.24            & $6.91 e{-9}$ & 6.02             \\
problem & $\text{IG}_2$       & $9.32 e{-8}$ & 10.62            & $8.15 e{-8}$ & 10.93             & $7.74 e{-8}$ & 11.63            & $7.25 e{-9}$ & 5.85             \\
shown in \cite{CUTEr}  & $\text{IG}_3$       & $9.91 e{-8}$ & 11.03            & $8.83 e{-8}$ & 11.24             & $8.31 e{-8}$ & 12.02            & $7.37 e{-9}$ & 5.76             \\
\bottomrule
\end{tabular}
\end{table}

\begin{table}[!h]\small
\centering
\setlength{\tabcolsep}{3.5pt}
\caption{The verification of \eqref{eq2025012205} for smooth problems shown in table \ref{Table2025020701}.}
\label{Table2025020702}
\begin{tabular}{lcccccccc}
\toprule
 {}           & \multicolumn{2}{c}{{200-th step}} & \multicolumn{2}{c}{{300-th step}}& \multicolumn{2}{c}{{400-th step}}  & \multicolumn{2}{c}{{500-th step}} \\
\cmidrule(lr){2-3} \cmidrule(lr){4-5} \cmidrule(lr){6-7} \cmidrule(lr){8-9}
IG     & $\mathcal{E}^{(1)}$ &  $\mathcal{E}^{(2)}$            &  $\mathcal{E}^{(1)}$&  $\mathcal{E}^{(2)}$         &  $\mathcal{E}^{(1)}$ &  $\mathcal{E}^{(2)}$            &  $\mathcal{E}^{(1)}$ &  $\mathcal{E}^{(2)}$         \\
\midrule
\multicolumn{9}{c}{{Woods} problem shown in  \cite{conn1988testing}}\\
\midrule
$\text{IG}_1$       & $6.34 e{4}$ & $5.92 e{-6}$   & $5.18 e{-7}$ & $4.97 e{-5}$   & $4.92 e{-8}$ & $4.64 e{-8}$   & $6.28 e{-9}$ & $5.91 e{-9}$  \\
$\text{IG}_2$  & $7.83 e{-6}$ & $7.42 e{-3}$   & $6.25 e{-7}$ & $5.92 e{-7}$   & $5.94 e{-8}$ & $5.51 e{-8}$   & $6.09 e{-9}$ & $5.82 e{-9}$  \\
 $\text{IG}_3$       & $8.07 e{-6}$ & $7.85 e{-6}$   & $7.52 e{-7}$ & $7.19 e{-10}$   & $6.63 e{-8}$ & $6.29 e{-8}$   & $5.92 e{-9}$ & $5.67 e{-9}$  \\
\midrule
\multicolumn{9}{c}{{Rosenbrock} problem shown in \cite{Luksan2010}}\\
\midrule
  $\text{IG}_1$       & $6.12 e{-6}$ & $5.83 e{-6}$   & $5.26 e{-7}$ & $4.97 e{-7}$   & $4.95 e{-8}$ & $4.64 e{-8}$   & $6.19 e{-9}$ & $5.91 e{-9}$  \\
 $\text{IG}_2$       & $7.58 e{-6}$ & $7.32 e{-6}$   & $6.83 e{-12}$ & $6.52 e{-7}$   & $6.04 e{-8}$ & $5.73 e{-8}$   & $5.97 e{-9}$ & $5.73 e{-10}$  \\
  $\text{IG}_3$       & $8.11 e{-6}$ & $7.93 e{-6}$   & $7.63 e{-7}$ & $7.39 e{-7}$   & $6.87 e{-8}$ & $6.61 e{-8}$   & $5.89 e{-9}$ & $5.62 e{-11}$  \\
\midrule
\multicolumn{9}{c}{Sum of \(\lfloor n/4\rfloor\) shown in \cite{J-more-test} }\\
\midrule
 $\text{IG}_1$       & $8.12 e{-6}$ & $7.93 e{-6}$   & $7.15 e{-7}$ & $6.98 e{-7}$   & $6.98 e{-8}$ & $6.75 e{-8}$   & $7.02 e{-9}$ & $6.81 e{-9}$  \\
$\text{IG}_2$       & $9.34 e{-6}$ & $9.12 e{-6}$   & $8.04 e{-7}$ & $7.82 e{-7}$   & $7.92 e{-8}$ & $7.64 e{-6}$   & $8.05 e{-9}$ & $7.81 e{-9}$  \\
$\text{IG}_3$       & $9.88 e{-6}$ & $9.65 e{-6}$   & $8.97 e{-7}$ & $8.72 e{-7}$   & $8.53 e{-8}$ & $8.21 e{-8}$   & $8.19 e{-9}$ & $7.94 e{-5}$  \\
\midrule
\multicolumn{9}{c}{Sparse  problem shown in  \cite{CUTEr} }\\
\midrule
$\text{IG}_1$       & $7.84 e{-5}$ & $7.61 e{-10}$   & $6.72 e{-7}$ & $6.51 e{-7}$   & $6.34 e{-8}$ & $6.13 e{-8}$   & $6.53 e{-9}$ & $6.32 e{-9}$  \\
 $\text{IG}_2$       & $8.96 e{-6}$ & $8.75 e{-6}$   & $7.64 e{-7}$ & $7.42 e{-7}$   & $7.21 e{-8}$ & $6.98 e{-8}$   & $7.18 e{-9}$ & $6.97 e{-10}$  \\
 $\text{IG}_3$       & $9.45 e{-6}$ & $9.21 e{-6}$   & $8.34 e{-8}$ & $8.11 e{-7}$   & $8.01 e{-8}$ & $7.75 e{-8}$   & $7.29 e{-9}$ & $7.07 e{-12}$  \\
\midrule
\multicolumn{9}{c}{Dixon-Maany problem shown in \cite{CUTEr}}\\
\midrule
$\text{IG}_1$       & $8.54 e{-4}$ & $8.31 e{-6}$   & $7.32 e{-7}$ & $7.11 e{-7}$   & $6.94 e{-8}$ & $6.73 e{-8}$   & $6.91 e{-9}$ & $6.72 e{-9}$  \\
$\text{IG}_2$       & $9.32 e{-6}$ & $9.11 e{-6}$   & $8.15 e{-7}$ & $7.92 e{-10}$   & $7.74 e{-8}$ & $7.51 e{-8}$   & $7.25 e{-9}$ & $7.03 e{-10}$  \\
 $\text{IG}_3$       & $9.91 e{-6}$ & $9.68 e{-6}$   & $8.83 e{-7}$ & $8.57 e{-8}$   & $8.31 e{-8}$ & $8.03 e{-8}$   & $7.37 e{-9}$ & $7.14 e{-11}$  \\
\bottomrule
\end{tabular}
\end{table}
\hspace{-0.4cm}or \href{https://www.sfu.ca/~ssurjano/}{\texttt{https://www.sfu.ca/\raisebox{-0.9ex}{\texttt{\char`~}}ssurjano/}}. When the Algorithm \ref{algorithm2025011301} is updated to the 500-th step, corresponding numerical results are presented in Tables \ref{Table2025020701}-\ref{Table2025020702}, where CPU represents the computation time,
$\mathcal{E}^{(r)} := \|(\mathcal{E}^{(r)}_{k-n+1}, \cdots, \mathcal{E}^{(r)}_{k-1})\|_{\infty}\, (r = 1, 2)$ and IG denotes the initial guess. Moreover, different initial guesses are also considered, i.e.,
\begin{equation}
\text{IG}_1=(1,\cdots,1)^{\top},\quad\quad \text{IG}_2=\sin(\text{IG}_1),  \quad\quad \text{IG}_3=\exp(\text{IG}_1).
\end{equation}
From Table \ref{Table2025020701}, it can be concluded that
the Algorithm \ref{algorithm2025011301} has distinct advantages over other methods (i.e., the classical Trust-Region method, derivative-free Trust-Region method (DFO-TR) shown in \cite{conn1997algorithm}, the classical Newton method), where the computation time for the Algorithm \ref{algorithm2025011301} is less than one for other methods, while the Algorithm \ref{algorithm2025011301} has higher accuracy degree. In Table \ref{Table2025020702}, the verification of \eqref{eq2025012205} for these smooth problems
is shown, where $\mathcal{E}^{(r)} (r = 1, 2)$ tend to zero with the iteration. This agrees well with \eqref{eq2025012205}.

\vspace{0.2cm}
\hspace{-0.45cm}\textbf{Case 2. problems with derivative blasting}
\medskip

To the best of our knowledge, there exist some problems with large derivative (i.e., derivative blasting). Since the large derivative can easily cause numerical instability, computing them will encounter inherent difficulties. Here the Algorithm \ref{algorithm2025011301} is also used to compute some problems with derivative blasting as follows:

\medskip

\textbf{Problem 1:}
\begin{equation}\label{eq2025021002}
\begin{aligned}
\hspace{-0.7cm}f({\bs x}) =
\left[ 1000 \left( x_2 - x_1^2 \right) \right]^2 + 1000(1 - x_1)^2 +
90000 \left( x_4 - x_3^2 \right)^2 + 1000(1 - x_3)^2 + \\
10100 \left[ (x_2 - 1)^2 + 1000(x_4 - 1)^2 \right] +
19800 (x_2 - 1)(x_4 - 1)+\sum_{i=1}^{5}x_i^2,
\end{aligned}
\end{equation}

\textbf{Problem 2:}
\begin{equation}\label{eq2025021003}
f({\bs x}) = 1000\sum_{i=1}^{100} \left[ 100 (x_{i+1} - x_i^2)^2 + (1 - x_i)^2 \right],
\end{equation}

\textbf{Problem 3:}
\begin{equation}\label{eq2025021004}
f({\bs x}) = 10^{5} \sum_{i=1}^{100} \cos(5\pi x_i) - 10^{3}\sum_{i=1}^{100} x_i^2.
\end{equation}
It is worth pointing out that the exact solution for \eqref{eq2025021002}-\eqref{eq2025021004} is all $\hat{{\bs x}} = (1, \cdots, 1)^{\top}$, and $f(\hat{{\bs x}}) = 0$. In addition, as seen in Fig.\ref{NewExample61}, $\|\nabla f\|_{\infty}$ is very large. Numerical results obtained by the Algorithm \ref{algorithm2025011301} are presented in Tables \ref{Table2025021201}-\ref{Table2025021202}. In Table \ref{Table2025021201}, our proposed method has a good performance. Numerical results shown in Table \ref{Table2025021202} verify the correctness of \eqref{eq2025012205} again.

\begin{figure}[!h]
\begin{center}
\hspace{-0.4cm}\subfigure[$\|\nabla f\|_{\infty}$ vs $x_{1}$ for the \textbf{problem 1}]{ \includegraphics[width=10.5cm,height=6.3cm]{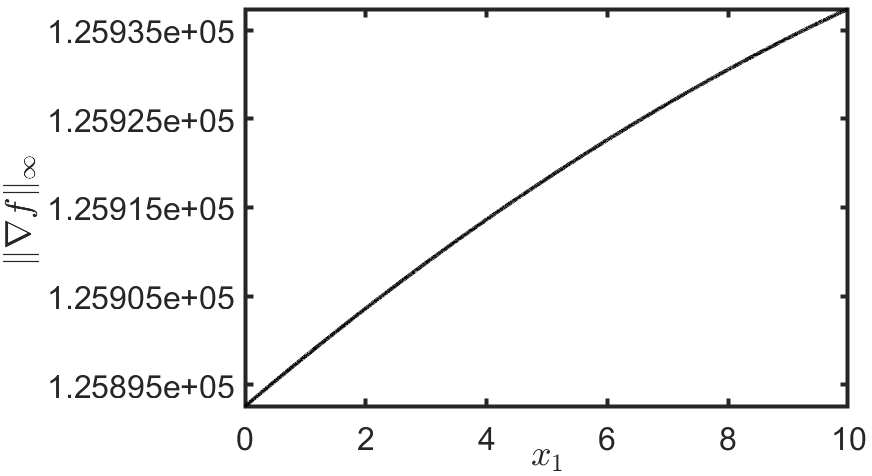}}\\\vspace{0.2cm}
\hspace{-0.05cm}\subfigure[$\|\nabla f\|_{\infty}$ vs $x_{1}$ for the \textbf{problem 2}]{ \includegraphics[width=5.5cm,height=4cm]{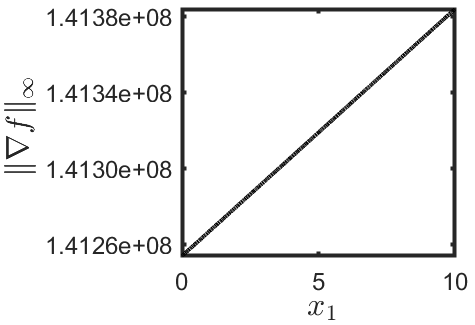}}\;\quad\quad
\subfigure[$\|\nabla f\|_{\infty}$ vs $x_{1}$ for the \textbf{problem 3}]{ \includegraphics[width=5.5cm,height=4cm]{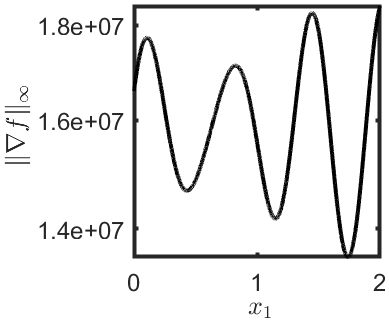}}\;
\caption{Some problems with derivative blasting.}\label{NewExample61}
\end{center}
\end{figure}

\begin{table}[h!]\small
\centering
\setlength{\tabcolsep}{5.5pt}
\caption{\hspace{-0.15cm}Performance of \textbf{Algorithm} \ref{algorithm2025011301} and other methods for \eqref{eq2025021002}-\eqref{eq2025021004}.}
\label{Table2025021201}
\hspace{-0.3cm}\begin{tabular}{clcccccccc}
\toprule[0.03cm]
& {}           & \multicolumn{2}{c}{{Trust-Region}} & \multicolumn{2}{c}{{DFO-TR}}& \multicolumn{2}{c}{{Newton method}}  & \multicolumn{2}{c}{\textbf{Algorithm} \ref{algorithm2025011301}} \\
\cmidrule(lr){3-4} \cmidrule(lr){5-6} \cmidrule(lr){7-8} \cmidrule(lr){9-10}
             &  IG     & Error & CPU             & Error & CPU              & Error & CPU             & Error & CPU             \\
\midrule[0.02cm]
&$\text{IG}_1$       & $6.81 e{-8}$ & 12.45             & $5.24 e{-8}$ & 13.12              & $4.87 e{-8}$ & 13.91             & $6.32 e{-9}$ & 10.17             \\
 {\bf Problem 1}     &$\text{IG}_2$      & $7.77 e{-8}$ & 12.82             & $6.31 e{-8}$ & 13.02              & $6.02 e{-8}$ & 13.29             & $6.02 e{-9}$ & 9.98             \\
 & $\text{IG}_3$       & $8.12 e{-8}$ & 13.09             & $7.48 e{-8}$ & 13.20              & $6.69 e{-8}$ & 13.62             & $5.89 e{-9}$ & 9.81             \\
\midrule[0.02cm]
  &$\text{IG}_1$       & $6.15 e{-8}$ & 12.69             & $5.29 e{-8}$ & 12.91              & $4.98 e{-8}$ & 13.29             & $6.21 e{-9}$ & 10.04             \\
 {\bf Problem 2}  &$\text{IG}_2$     & $7.62 e{-8}$ & 12.94             & $6.79 e{-8}$ & 13.10              & $6.07 e{-8}$ & 13.56             & $5.94 e{-9}$ & 9.76             \\
 & $\text{IG}_3$       & $8.09 e{-8}$ & 13.21             & $7.67 e{-8}$ & 13.41              & $6.91 e{-8}$ & 13.68             & $5.86 e{-9}$ & 9.64             \\
\midrule[0.02cm]
 & $\text{IG}_1$       & $8.09 e{-8}$ & 14.31            & $7.12 e{-8}$ & 14.45             & $6.94 e{-8}$ & 15.07            & $7.05 e{-9}$ & 10.45             \\
 {\bf Problem 3}  & $\text{IG}_2$       & $9.29 e{-8}$ & 14.55            & $8.09 e{-8}$ & 14.81             & $7.87 e{-8}$ & 15.49            & $8.01 e{-9}$ & 10.18             \\
 & $\text{IG}_3$       & $9.91 e{-8}$ & 15.12            & $8.94 e{-8}$ & 15.21             & $8.47 e{-8}$ & 15.69            & $8.23 e{-9}$ & 10.28             \\
\bottomrule[0.03cm]
\end{tabular}
\end{table}

\begin{table}[h!]\small
\centering
\setlength{\tabcolsep}{5.5pt}
\caption{The verification of \eqref{eq2025012205} for \eqref{eq2025021002}-\eqref{eq2025021004}.}
\label{Table2025021202}
\begin{tabular}{lcccccccc}
\toprule
 {}           & \multicolumn{2}{c}{{700-th step}} & \multicolumn{2}{c}{{800-th step}}& \multicolumn{2}{c}{{900-th step}}  & \multicolumn{2}{c}{{1000-th step}} \\
\cmidrule(lr){2-3} \cmidrule(lr){4-5} \cmidrule(lr){6-7} \cmidrule(lr){8-9}
             IG     & $\mathcal{E}^{(1)}$ &  $\mathcal{E}^{(2)}$            &  $\mathcal{E}^{(1)}$&  $\mathcal{E}^{(2)}$         &$\mathcal{E}^{(1)}$ &  $\mathcal{E}^{(2)}$            &$\mathcal{E}^{(1)}$ &  $\mathcal{E}^{(2)}$          \\
\midrule
\multicolumn{9}{c}{\bf Problem 1}\\
\midrule
$\text{IG}_1$       & $3.78 e{-4}$ & $5.95 e{-6}$   & $5.12 e{-7}$ & $4.91 e{-5}$   & $4.86 e{-8}$ & $4.72 e{-8}$   & $6.31 e{-9}$ & $5.97 e{-9}$  \\
  $\text{IG}_2$  & $7.91 e{-6}$ & $7.35 e{-3}$   & $6.19 e{-7}$ & $5.87 e{-7}$   & $5.87 e{-8}$ & $5.47 e{-8}$   & $6.04 e{-9}$ & $5.79 e{-9}$  \\
 $\text{IG}_3$       & $8.12 e{-6}$ & $7.92 e{-6}$   & $7.46 e{-7}$ & $7.10 e{-10}$   & $6.69 e{-8}$ & $6.31 e{-8}$   & $5.88 e{-9}$ & $5.70 e{-9}$  \\
\midrule
\multicolumn{9}{c}{\bf Problem 2}\\
\midrule
  $\text{IG}_1$       & $6.09 e{-6}$ & $5.79 e{-6}$   & $5.21 e{-7}$ & $4.92 e{-7}$   & $4.91 e{-8}$ & $4.71 e{-8}$   & $6.14 e{-9}$ & $5.88 e{-9}$  \\
 $\text{IG}_2$       & $7.65 e{-6}$ & $7.29 e{-6}$   & $6.78 e{-12}$ & $6.49 e{-7}$   & $6.02 e{-8}$ & $5.76 e{-8}$   & $5.91 e{-9}$ & $5.71 e{-10}$  \\
  $\text{IG}_3$       & $8.19 e{-6}$ & $7.87 e{-6}$   & $7.69 e{-7}$ & $7.34 e{-7}$   & $6.91 e{-8}$ & $6.65 e{-8}$   & $5.92 e{-9}$ & $5.65 e{-11}$  \\
\midrule
\multicolumn{9}{c}{\bf Problem 3}\\
\midrule
 $\text{IG}_1$       & $8.14 e{-6}$ & $7.95 e{-6}$   & $7.19 e{-7}$ & $7.01 e{-7}$   & $7.01 e{-8}$ & $6.79 e{-8}$   & $7.08 e{-9}$ & $6.85 e{-9}$  \\
$\text{IG}_2$       & $9.38 e{-6}$ & $9.14 e{-6}$   & $8.07 e{-7}$ & $7.85 e{-7}$   & $7.88 e{-8}$ & $7.59 e{-6}$   & $8.09 e{-9}$ & $7.79 e{-9}$  \\
$\text{IG}_3$       & $9.92 e{-6}$ & $9.67 e{-6}$   & $9.01 e{-7}$ & $8.78 e{-7}$   & $8.57 e{-8}$ & $8.18 e{-8}$   & $8.22 e{-9}$ & $7.97 e{-11}$  \\
\bottomrule
\end{tabular}
\end{table}

\clearpage
\hspace{-0.35cm}\textbf{Case 3. nonsmooth problems}
\medskip

In many scientific and engineering problems, nonsmooth problems
are often seen, such as regression \cite{1996Tibshirani}, compressed sensing \cite{2006Donoho}, visual coding \cite{1996Olshausen}, imaging decomposition \cite{2015Soubies}, etc. Here the Algorithm \ref{algorithm2025011301} is used to test some nonsmooth problems, and their corresponding objective functions shown in \cite{1989Krzysztof, 1992Makela} are as follows:
\begin{equation}\label{eq2025022104}
\vspace{0.15cm}
\hspace{-3.9cm}  (\textrm{P1}):  \hspace{2cm}  f({\bs x}) = \max_{1 \leq i \leq 50} | \sum_{j=1}^{50} \frac{x_j}{i + j - 1}|,
\end{equation}
\begin{equation}\label{eq2025022105}
\vspace{0.15cm}
\hspace{-3.9cm} (\textrm{P2}):  \hspace{1.4cm}\quad\quad\quad  f({\bs x}) = \sum_{i=1}^{50}|\sum_{j=1}^{50} \frac{x_j}{i + j - 1}|,
\end{equation}
\begin{equation}\label{eq2025022106}
\vspace{0.1cm}
(\textrm{P3}):  \quad  f({\bs x}) = \max \left\{ x_1^2 + (x_2 - 1)^2 + x_2 - 1, -x_1^2 - (x_2 - 1)^2 + x_2 + 1 \right\},
\end{equation}
where the exact solution of \eqref{eq2025022104}-\eqref{eq2025022106} is all $(0, \cdots, 0)^{\top}$. In addition, the nonsmooth sparse regression problem with cardinality penalty shown in \cite{2020BianChen} is also considered, i.e.,\vspace{0.13cm}
\begin{equation}\label{eq2025022101}
\hspace{-3.2cm}(\textrm{P4}): \quad\quad\quad\quad\quad\quad \min_{x \in \mathbb{R}^n} f({\bs x}) := \|A{\bs x} - b\|_1 + \lambda \|{\bs x}\|_0,\vspace{0.1cm}
\end{equation}
where $A\in \mathbb{R}^{m\times n}, b \in \mathbb{R}^n$, $0 \leq x_i \leq 1$ and
\begin{equation*}
\|{\bs x}\|_0 = \sum\limits_{i=1,\, x_i\neq 0}^n|x_i|^0.
\end{equation*}
Moreover, the corresponding continuous relaxation problem is also considered, i.e.,
\begin{equation}\label{eq2025022103}
\min_{{\bs x} \in \mathbb{R}^n} \|A{\bs x} - b\|_1 + \lambda \Phi({\bs x}),
\end{equation}
where
\begin{equation}\label{eq2025022102}
\phi(t) = \min \{ 1, |t| / \mu \}   \quad\, \textrm{and} \quad\,
\Phi({\bs x}) = \sum_{i=1}^{n} \phi(x_i).
\end{equation}
As seen in Fig.\ref{NewExample63}, $\phi$ in \eqref{eq2025022102} is plotted with varying $t$, indicating that $\phi$ is a nonsmooth function. This means that $\Phi({\bs x})$ in \eqref{eq2025022103} is also nonsmooth. Moreover, as mentioned in \cite{2020BianChen}, the solution of \eqref{eq2025022103} will approximate the solution of \eqref{eq2025022102} with $\mu \rightarrow 0$, which will be verified by the Algorithm \ref{algorithm2025011301}.\vspace{-0.3cm}

\begin{figure}[!h]
\begin{center}
\includegraphics[width=5.5cm]{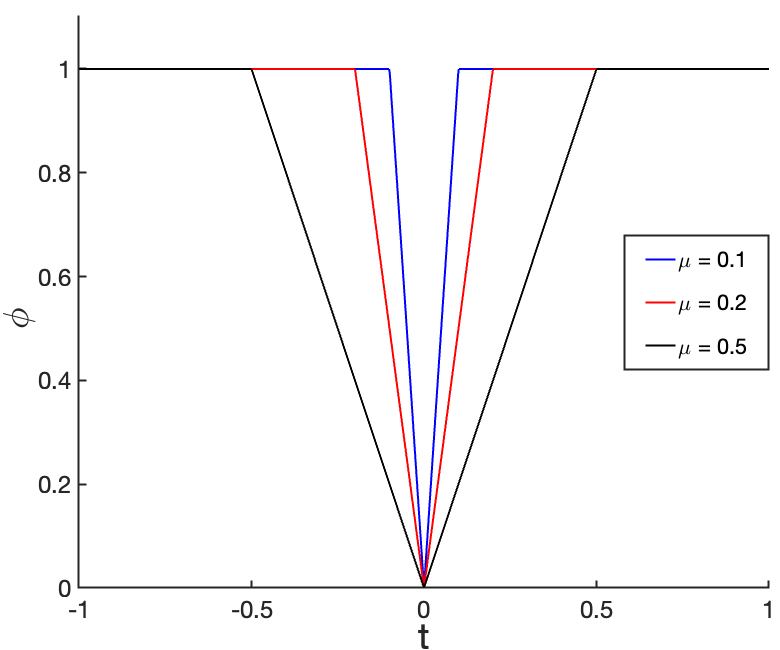}\vspace{-0.2cm}
\caption{$\phi$ vs $t$ in \eqref{eq2025022102}.}\label{NewExample63}
\end{center}
\end{figure}
In \eqref{eq2025022101} or \eqref{eq2025022103}, we choose $\lambda = 1$, $b = 1$ and $A = (1, 1)$. In Table.\ref{Table2025021201}, the performances of Algorithm \ref{algorithm2025011301} and other methods are presented, where some methods don't converge (denoted by $\times$) with different initial guesses, while the computation time (CPU) for the Algorithm \ref{algorithm2025011301} is less than one for the DFO-TR, and it has higher accuracy. In Table \ref{Table2025022108}, the validity of \eqref{eq2025012205} for \eqref{eq2025022104}-\eqref{eq2025022101} is verified based on the Algorithm \ref{algorithm2025011301}. In Table \ref{table2025022101}, $\|{\bs x}^*\|_{\infty}$ is reduced with $\mu\rightarrow 0$, where ${\bs x}^*$ represents the solution of \eqref{eq2025022103}. This agrees well with the result mentioned in \cite{2020BianChen}, indicating that our proposed method is very feasible and efficient.

\begin{table}[h!]\small
\centering
\setlength{\tabcolsep}{4pt}
\caption{Performance of \textbf{Algorithm} \ref{algorithm2025011301} and other methods for \eqref{eq2025022104}-\eqref{eq2025022101}.}
\label{Table2025021201}
\begin{tabular}{clcccccccc}
\toprule[0.03cm]
& {}           & \multicolumn{2}{c}{{Trust-Region}} & \multicolumn{2}{c}{{DFO-TR}}& \multicolumn{2}{c}{{Newton method}}  & \multicolumn{2}{c}{\textbf{Algorithm} \ref{algorithm2025011301}} \\
\cmidrule(lr){3-4} \cmidrule(lr){5-6} \cmidrule(lr){7-8} \cmidrule(lr){9-10}
             &\,  IG     & Error & CPU             & Error & CPU              & Error & CPU             & Error & CPU             \\
\midrule[0.02cm]
&\, $\text{IG}_1$       & $\times$ & $\times$             & $5.47 e{-8}$ & 13.22              & $\times$ & $\times$             & $6.50 e{-9}$ & 10.12             \\
(P1)\quad   &\, $\text{IG}_2$      & $\times$ & $\times$             & $6.59 e{-8}$ & 13.14              & $\times$ & $\times$             & $6.10 e{-9}$ & 9.85             \\
 &\, $\text{IG}_3$       & $\times$ & $\times$             & $\times$ & $\times$             & $\times$ & $\times$             & $5.97 e{-9}$ & 9.71             \\
\midrule[0.02cm]
  &\, $\text{IG}_1$       & $\times$ & $\times$             & $5.42 e{-8}$ & 13.05              & $\times$ & $\times$             & $6.38 e{-9}$ & 9.97             \\
 (P2)  &\, $\text{IG}_2$     & $\times$ & $\times$             & $6.98 e{-8}$ & 13.24              & $\times$ & $\times$             & $6.05 e{-9}$ & 9.62             \\
 &\, $\text{IG}_3$       & $\times$ & $\times$             &$\times$ & $\times$             & $\times$ & $\times$             & $5.89 e{-9}$ & 9.55             \\
\midrule[0.02cm]
 &\, $\text{IG}_1$       & $\times$ & $\times$            & $7.35 e{-8}$ & 4.58             & $\times$ & $\times$            & $7.20 e{-9}$ & 1.38             \\
 (P3) &\, $\text{IG}_2$       & $\times$ & $\times$            & $8.25 e{-8}$ & 5.01             & $\times$ & $\times$            & $8.15 e{-9}$ & 1.10             \\
 &\, $\text{IG}_3$       & $\times$ & $\times$            & $9.10 e{-8}$ & 5.34             & $\times$ & $\times$            & $8.32 e{-9}$ & 1.22             \\
 \midrule[0.02cm]
 &\, $\text{IG}_1$       & $\times$ & $\times$            & $7.35 e{-8}$ & 14.58             & $\times$ & $\times$            & $7.20 e{-9}$ & 10.38             \\
  (P4)\quad  &\, $\text{IG}_2$       & $\times$ & $\times$            & $8.25 e{-8}$ & 15.01             & $\times$ & $\times$            & $8.15 e{-9}$ & 10.10             \\
 &\, $\text{IG}_3$       & $\times$ & $\times$            & $9.10 e{-8}$ & 15.34             & $\times$ & $\times$            & $8.32 e{-9}$ & 10.22             \\
\bottomrule[0.03cm]
\end{tabular}
\end{table}
\vspace{-0.2cm}
\begin{table}[htbp]\small
\centering
\setlength{\tabcolsep}{3pt}
\caption{The verification of \eqref{eq2025012205} for \eqref{eq2025022104}-\eqref{eq2025022101}.}
\label{Table2025022108}
\begin{tabular}{lcccccccc}
\toprule
 {}           & \multicolumn{2}{c}{{700-th step}} & \multicolumn{2}{c}{{800-th step}}& \multicolumn{2}{c}{{900-th step}}  & \multicolumn{2}{c}{{1000-th step}} \\
\cmidrule(lr){2-3} \cmidrule(lr){4-5} \cmidrule(lr){6-7} \cmidrule(lr){8-9}
             IG     & $\mathcal{E}^{(1)}$ &  $\mathcal{E}^{(2)}$            &  $\mathcal{E}^{(1)}$&  $\mathcal{E}^{(2)}$         &$\mathcal{E}^{(1)}$ &  $\mathcal{E}^{(2)}$            &$\mathcal{E}^{(1)}$ &  $\mathcal{E}^{(2)}$          \\
\midrule
\multicolumn{9}{c}{(P1)}\\
\midrule
$\text{IG}_1$       & $3.92 e{-4}$ & $6.10 e{-6}$   & $5.21 e{-7}$ & $5.02 e{-5}$   & $4.95 e{-8}$ & $4.78 e{-8}$   & $6.45 e{-9}$ & $6.02 e{-9}$  \\
  $\text{IG}_2$  & $8.12 e{-6}$ & $7.50 e{-3}$   & $6.35 e{-7}$ & $6.01 e{-7}$   & $6.01 e{-8}$ & $5.60 e{-8}$   & $6.18 e{-9}$ & $5.90 e{-9}$  \\
 $\text{IG}_3$       & $8.30 e{-6}$ & $8.10 e{-6}$   & $7.60 e{-7}$ & $7.25 e{-10}$   & $6.85 e{-8}$ & $6.45 e{-8}$   & $6.00 e{-9}$ & $5.85 e{-9}$  \\
\midrule[0.03cm]
\multicolumn{9}{c}{(P2)}\\
\midrule
  $\text{IG}_1$       & $6.25 e{-6}$ & $5.95 e{-6}$   & $5.35 e{-7}$ & $5.02 e{-7}$   & $5.05 e{-8}$ & $4.80 e{-8}$   & $6.30 e{-9}$ & $6.02 e{-9}$  \\
 $\text{IG}_2$       & $7.88 e{-6}$ & $7.40 e{-6}$   & $6.90 e{-12}$ & $6.60 e{-7}$   & $6.18 e{-8}$ & $5.92 e{-8}$   & $6.05 e{-9}$ & $5.78 e{-10}$  \\
  $\text{IG}_3$       & $8.35 e{-6}$ & $8.05 e{-6}$   & $7.85 e{-7}$ & $7.49 e{-7}$   & $7.15 e{-8}$ & $6.85 e{-8}$   & $6.10 e{-9}$ & $5.90 e{-11}$  \\
\midrule[0.03cm]
\multicolumn{9}{c}{(P3)}\\
\midrule
 $\text{IG}_1$       & $8.30 e{-6}$ & $8.10 e{-6}$   & $7.35 e{-7}$ & $7.12 e{-7}$   & $7.18 e{-8}$ & $6.90 e{-8}$   & $7.20 e{-9}$ & $7.00 e{-9}$  \\
$\text{IG}_2$       & $9.50 e{-6}$ & $9.20 e{-6}$   & $8.22 e{-7}$ & $8.05 e{-7}$   & $8.10 e{-8}$ & $7.80 e{-6}$   & $8.25 e{-9}$ & $7.90 e{-9}$  \\
$\text{IG}_3$       & $10.00 e{-6}$ & $9.75 e{-6}$   & $9.15 e{-7}$ & $8.92 e{-7}$   & $8.75 e{-8}$ & $8.30 e{-8}$   & $8.35 e{-9}$ & $8.05 e{-11}$  \\
\midrule[0.03cm]
\multicolumn{9}{c}{(P4)}\\
\midrule
 $\text{IG}_1$       & $8.30 e{-6}$ & $8.10 e{-6}$   & $7.35 e{-7}$ & $7.12 e{-7}$   & $7.18 e{-8}$ & $6.90 e{-8}$   & $7.20 e{-9}$ & $7.00 e{-9}$  \\
$\text{IG}_2$       & $9.50 e{-6}$ & $9.20 e{-6}$   & $8.22 e{-7}$ & $8.05 e{-7}$   & $8.10 e{-8}$ & $7.80 e{-6}$   & $8.25 e{-9}$ & $7.90 e{-9}$  \\
$\text{IG}_3$       & $10.00 e{-6}$ & $9.75 e{-6}$   & $9.15 e{-7}$ & $8.92 e{-7}$   & $8.75 e{-8}$ & $8.30 e{-8}$   & $8.35 e{-9}$ & $8.05 e{-11}$  \\
\bottomrule
\end{tabular}
\end{table}
\vspace{-0.2cm}
\begin{table}[h!]
\centering
\caption{$\|{\bs x}^*\|_{\infty}$ vs $\mu$ for \eqref{eq2025022103}}
\label{table2025022101}
\renewcommand{\arraystretch}{1.2} 
\begin{tabular}{c|ccc}
\hline
$\mu$  & $10^{-1}$ & $10^{-2}$ & $10^{-3}$ \\
\hline
$\|{\bs x}^*\|_{\infty}$  & $1.87\times 10^{-3}$ & $2.73\times 10^{-6}$ & $7.36 \times 10^{-10}$ \\
\hline
\end{tabular}
\end{table}

Finally, a nonsmooth problem arising from the following logarithmic Schr\"{o}dinger equation in quantum mechanics (see \cite{2016Troy}) is first considered, i.e.,
\begin{equation}\label{2024072501}
\begin{cases}\vspace{0.15cm}
u'' + \frac{s-1}{r}u' + u\ln|u| = 0,    \quad\quad\quad  r \in (0, \infty),  \\
u'(0) = 0,   \quad\quad   (u(r), u'(r)) \to (0, 0) \; \textrm{as}\; r \to \infty,
\end{cases}
\end{equation}
where $s>1$. Obviously, the nonlinear term $u\ln|u|$ in \eqref{2024072501} exists a singularity at the origin. In
addition, it is worth pointing out that if $u$ is a solution of \eqref{2024072501}, so is $-u$. For simplicity, we truncate
the unbounded domain [0, $+\infty$) to the bounded domain $[0, b]$, where $b$ is a constant to be selected. Based on the Legendre-Galerkin method shown in \cite{shen2011spectral, 2022Two} and the least-square method, the objective function is formed by the residual. Here
four cases (i.e. $(b, s) := (100, 2),(100, 4),(200, 2)$ and $(200, 4))$ are considered as illustrative examples. In Fig.\ref{figure2025022101}, solutions of \eqref{2024072501} are shown, where solution-1 represents the solution $u$, and solution-2 represents another solution $-u$. In Tables \ref{table2025022201}-\ref{table2025022202}, numerical results are presented, indicating that the Algorithm \ref{algorithm2025011301} is efficient.

\begin{figure}[!h]
\begin{center}
\hspace{-0.4cm}
\subfigure[b=100, s=2]{ \includegraphics[width=6.5cm,height=4cm]{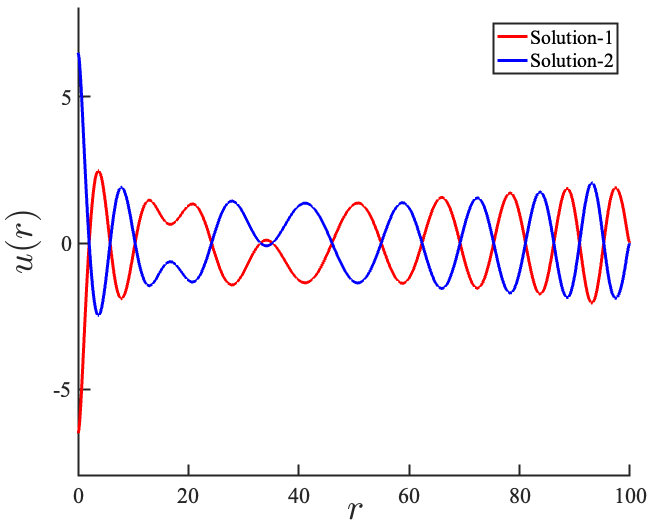}}\;
\subfigure[b=100, s=4]{\includegraphics[width=6.5cm,height=4cm]{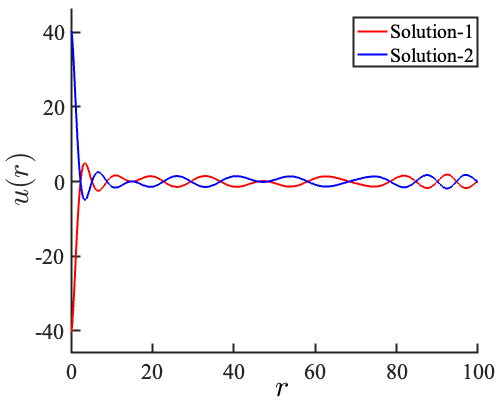}}\\
\subfigure[b=200, s=2]{\includegraphics[width=6.5cm,height=4cm]{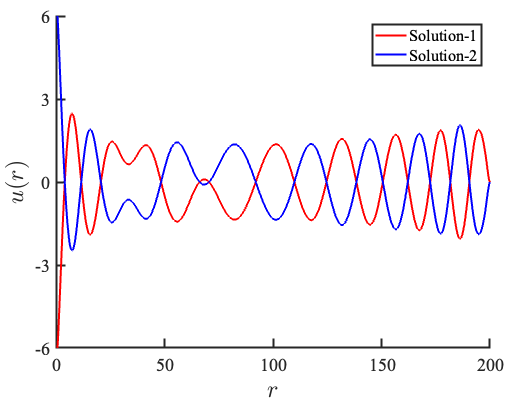}}\;
\subfigure[b=200, s=4]{\includegraphics[width=6.5cm,height=4cm]{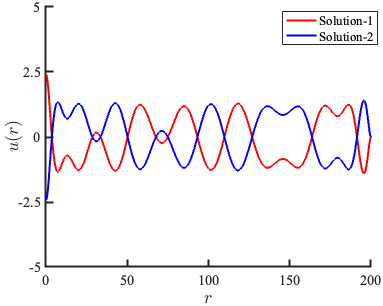}}
\caption{$u(r)$ vs $r$ for \eqref{2024072501}.}\label{figure2025022101}
\end{center}
\end{figure}

\begin{table}[h!]\small
\centering
\setlength{\tabcolsep}{3.5pt}
\caption{Performance of \textbf{Algorithm} \ref{algorithm2025011301} and other methods for \eqref{2024072501}.}
\label{table2025022201}
\begin{tabular}{clcccccccc}
\toprule[0.03cm]
& {}           & \multicolumn{2}{c}{{Trust-Region}} & \multicolumn{2}{c}{{DFO-TR}}& \multicolumn{2}{c}{{Newton method}}  & \multicolumn{2}{c}{\textbf{Algorithm} \ref{algorithm2025011301}} \\
\cmidrule(lr){3-4} \cmidrule(lr){5-6} \cmidrule(lr){7-8} \cmidrule(lr){9-10}
             &\,  IG     & Error & CPU             & Error & CPU              & Error & CPU             & Error & CPU     \\
\midrule
 &\, $\text{IG}_1$       & $\times$ & $\times$            & $7.35 e{-8}$ & 14.58             & $\times$ & $\times$            & $7.20 e{-9}$ & 10.38             \\
 b=100, s=2  &\, $\text{IG}_2$       & $\times$ & $\times$            & $8.25 e{-8}$ & 15.01             & $\times$ & $\times$            & $8.15 e{-9}$ & 10.10             \\
 &\, $\text{IG}_3$       & $\times$ & $\times$            & $9.10 e{-8}$ & 15.34             & $\times$ & $\times$            & $8.32 e{-9}$ & 10.22             \\
 \midrule
    &\, $\text{IG}_1$       & $\times$ & $\times$            & $6.95 e{-8}$ & 14.22             & $\times$ & $\times$            & $6.85 e{-9}$ & 10.12             \\
b=100, s=4 &\, $\text{IG}_2$       & $\times$ & $\times$            & $7.80 e{-8}$ & 14.85             & $\times$ & $\times$            & $7.72 e{-9}$ & 10.05             \\
 & \, $\text{IG}_3$       & $\times$ & $\times$            & $8.60 e{-8}$ & 15.10             & $\times$ & $\times$            & $8.15 e{-9}$ & 10.18             \\
 \midrule
  &\, $\text{IG}_1$       & $\times$ & $\times$            & $6.50 e{-8}$ & 13.90             & $\times$ & $\times$            & $6.45 e{-9}$ & 9.80              \\
  b=200, s=2 &\, $\text{IG}_2$       & $\times$ & $\times$            & $7.40 e{-8}$ & 14.40             & $\times$ & $\times$            & $7.32 e{-9}$ & 9.95              \\
 &\, $\text{IG}_3$       & $\times$ & $\times$            & $8.20 e{-8}$ & 14.75             & $\times$ & $\times$            & $7.90 e{-9}$ & 10.10             \\
 \midrule
   &\, $\text{IG}_1$       & $\times$ & $\times$            & $6.10 e{-8}$ & 13.50             & $\times$ & $\times$            & $6.05 e{-9}$ & 9.70              \\
 b=200, s=4 &\, $\text{IG}_2$       & $\times$ & $\times$            & $7.00 e{-8}$ & 14.00             & $\times$ & $\times$            & $6.92 e{-9}$ & 9.88              \\
 &\, $\text{IG}_3$       & $\times$ & $\times$            & $7.80 e{-8}$ & 14.50             & $\times$ & $\times$            & $7.52 e{-9}$ & 10.00             \\
\bottomrule[0.03cm]
\end{tabular}
\end{table}

\begin{table}[htbp]\small
\centering
\setlength{\tabcolsep}{4pt}
\caption{The verification of \eqref{eq2025012205} for \eqref{2024072501}.}
\label{table2025022202}
\begin{tabular}{lcccccccc}
\toprule
 {}           & \multicolumn{2}{c}{{700-th step}} & \multicolumn{2}{c}{{800-th step}}& \multicolumn{2}{c}{{900-th step}}  & \multicolumn{2}{c}{{1000-th step}} \\
\cmidrule(lr){2-3} \cmidrule(lr){4-5} \cmidrule(lr){6-7} \cmidrule(lr){8-9}
             IG     & $\mathcal{E}^{(1)}$ &  $\mathcal{E}^{(2)}$            &  $\mathcal{E}^{(1)}$&  $\mathcal{E}^{(2)}$         &$\mathcal{E}^{(1)}$ &  $\mathcal{E}^{(2)}$            &$\mathcal{E}^{(1)}$ &  $\mathcal{E}^{(2)}$       \\
\midrule
\multicolumn{9}{c}{s=100, b=2}\\
\midrule
 $\text{IG}_1$       & $8.30 e{-6}$ & $8.10 e{-6}$   & $7.35 e{-7}$ & $7.12 e{-7}$   & $7.18 e{-8}$ & $6.90 e{-8}$   & $7.20 e{-9}$ & $7.00 e{-9}$  \\
$\text{IG}_2$       & $9.50 e{-6}$ & $9.20 e{-6}$   & $8.22 e{-7}$ & $8.05 e{-7}$   & $8.10 e{-8}$ & $7.80 e{-6}$   & $8.25 e{-9}$ & $7.90 e{-9}$  \\
$\text{IG}_3$       & $10.00 e{-6}$ & $9.75 e{-6}$   & $9.15 e{-7}$ & $8.92 e{-7}$   & $8.75 e{-8}$ & $8.30 e{-8}$   & $8.35 e{-9}$ & $8.05 e{-11}$  \\
\midrule[0.03cm]
\multicolumn{9}{c}{s=100, b=4}\\
\midrule
 $\text{IG}_1$       & $8.10 e{-6}$ & $7.95 e{-6}$   & $7.20 e{-7}$ & $7.00 e{-7}$   & $6.98 e{-8}$ & $6.72 e{-8}$   & $6.85 e{-9}$ & $6.70 e{-9}$  \\
$\text{IG}_2$       & $9.25 e{-6}$ & $9.00 e{-6}$   & $8.05 e{-7}$ & $7.85 e{-7}$   & $7.90 e{-8}$ & $7.60 e{-8}$   & $7.95 e{-9}$ & $7.60 e{-9}$  \\
$\text{IG}_3$       & $9.80 e{-6}$ & $9.55 e{-6}$   & $8.90 e{-7}$ & $8.70 e{-7}$   & $8.50 e{-8}$ & $8.10 e{-8}$   & $8.10 e{-9}$ & $7.80 e{-11}$  \\
\midrule[0.03cm]
\multicolumn{9}{c}{s=200, b=2}\\
\midrule
 $\text{IG}_1$       & $7.95 e{-6}$ & $7.80 e{-6}$   & $7.05 e{-7}$ & $6.88 e{-7}$   & $6.85 e{-8}$ & $6.60 e{-8}$   & $6.72 e{-9}$ & $6.55 e{-9}$  \\
$\text{IG}_2$       & $9.10 e{-6}$ & $8.85 e{-6}$   & $7.88 e{-7}$ & $7.70 e{-7}$   & $7.75 e{-8}$ & $7.45 e{-8}$   & $7.78 e{-9}$ & $7.40 e{-9}$  \\
$\text{IG}_3$       & $9.65 e{-6}$ & $9.40 e{-6}$   & $8.70 e{-7}$ & $8.50 e{-7}$   & $8.30 e{-8}$ & $7.90 e{-8}$   & $7.95 e{-9}$ & $7.65 e{-11}$  \\
\midrule[0.03cm]
\multicolumn{9}{c}{s=200, b=4}\\
\midrule
 $\text{IG}_1$       & $7.80 e{-6}$ & $7.65 e{-6}$   & $6.90 e{-7}$ & $6.75 e{-7}$   & $6.70 e{-8}$ & $6.50 e{-8}$   & $6.60 e{-9}$ & $6.42 e{-9}$  \\
$\text{IG}_2$       & $9.00 e{-6}$ & $8.75 e{-6}$   & $7.75 e{-7}$ & $7.55 e{-7}$   & $7.65 e{-8}$ & $7.35 e{-8}$   & $7.65 e{-9}$ & $7.30 e{-9}$  \\
$\text{IG}_3$       & $9.50 e{-6}$ & $9.25 e{-6}$   & $8.50 e{-7}$ & $8.35 e{-7}$   & $8.15 e{-8}$ & $7.75 e{-8}$   & $7.80 e{-9}$ & $7.50 e{-11}$  \\
\bottomrule

\end{tabular}
\end{table}

\section{Conclusion and future work}\label{sect6}

In this paper, we have proposed an efficient numerical method tailored for solving \eqref{eq2024121301} whether the objective function is smooth or not. With 2$n$ constrained conditions, a quadratic model to approximate the objective function has been established. To reduce the computational complexity, a simplified quadratic model with 2 constrained conditions has also been proposed, where numerical results fully demonstrate the effectiveness of our proposed method.

Even though a handful of examples are tested in this paper. In the future, based on the subspace method, we will extend our proposed method to more complex problems, such as large-scale problems, sparse nonsmooth optimization problems and three-dimensional Schr\"{o}dinger equations with singular nonlinearity.

\bibliography{mybib}

\end{document}